\newtheorem{tm}{tm}[section]
\newtheorem{theorem}[tm]{Theorem}
\newtheorem{corollary}[tm]{Corollary}
\newtheorem{proposition}[tm]{Proposition}
\newtheorem{definition}[tm]{Definition}
\newtheorem{remark}[tm]{Remark}
\newcommand{\process}[1]{\{#1_t\}_{t\geq0}}
\newcommand {\R} {\ensuremath{\mathbb{R}}}
\newcommand {\ZZ} {\ensuremath{\mathbb{Z}}}
\newcommand {\N} {\ensuremath{\mathbb{N}}}
\newcommand {\CC} {\ensuremath{\mathbb{C}}}
\numberwithin{equation}{section}
\def\be{\begin{equation}}
\def\ee{\end{equation}}
\begin{document}

 \title{Homogenization of
periodic diffusion with small jumps}
  \author{
Nikola Sandri\'{c}\\ Institut f\"ur Mathematische Stochastik\\ Fachrichtung Mathematik, Technische Universit\"at Dresden, 01062 Dresden, Germany\\
and\\
Department of Mathematics\\
         Faculty of Civil Engineering, University of Zagreb, 10000 Zagreb,
         Croatia \\
        Email: nsandric@grad.hr}

 \maketitle
\begin{center}
{
\medskip

} \end{center}

\begin{abstract}
In this paper, we study the homogenization of  a diffusion process with jumps, that is,   Feller process generated by an
 integro-differential operator.  This problem is closely related to the problem of  homogenization of boundary value problems arising in studying the behavior of heterogeneous media.
 Under the assumptions that the corresponding generator  has vanishing drift coefficient,    rapidly periodically oscillating  diffusion and jump coefficients,  that it admits only ``small  jumps" (that is, the jump kernel has  finite second moment) and under certain additional regularity conditions, we prove that the homogenized process is a Brownian motion.
 The presented results generalize the  classical and well-known results related to
  the homogenization of a  diffusion process.

\end{abstract}
{\small \textbf{AMS 2010 Mathematics Subject Classification:} 35S15, 47G20, 60F17,  60J75}
\smallskip

\noindent {\small \textbf{Keywords and phrases:} diffusion with jumps,  homogenization,   semimartingale characteristics}

%
%
%
%


\section{Introduction}\label{s1}
Many phenomena arising in nature, engineering and social sciences involve heterogeneous media, such as  problems related to diffusion of population, composite
materials and  large financial market
movements. Because
of heterogeneity,
mathematical models  used in describing these phenomena (typically
stochastic processes or integro-differential equations) are characterized by heterogeneous coefficients, and as such are very complicated and hard to analyze. However, on macroscopic scales, they often show an effective behavior which is a result of macroscopic
scale averaging of the complicated microscopic scale structure.
More precisely, in many cases
when the coefficients
 (rapidly)
vary on  small scales
it is possible to use the fine microscopic structure of the media
 to derive an effective (homogenized) model which is a valid approximation of the
initial model and, in general, it is of much simpler from (typically it is characterized by
constant coefficients).

The  goal of this paper is to investigate the homogenization of a $d$-dimensional Feller process
$\process{F^{\varepsilon}}$    generated by a non-local (integro-differential) operator (with vanishing drift term) of the form
\begin{equation}\label{eq1.1}\mathcal{A}_{\varepsilon} f(x):=
\frac{1}{2}\sum_{i,j=1}^{d}c_{ij}(x/\varepsilon)\frac{\partial^{2}f(x)}{\partial x_i\partial x_j}
+\frac{1}{\varepsilon^{2}}\int_{\R^{d}}\left(f(\varepsilon y+x)-f(x)-\varepsilon\sum_{i=1}^{d}y_i\frac{\partial f(x)}{\partial x_i}1_{B(0,1)}(y)\right)\nu(x/\varepsilon,dy),\end{equation} the so-called (zero-drift) diffusion process with jumps  (see Section \ref{s2} for details). Here, as usual, $\varepsilon>0$
 is a small  parameter defined as a microstructure period  intended to tend to zero.
We assume that
  the coefficients $c(x)=(c_{ij}(x))_{1\leq i,j\leq d}$ and $\nu(x,dy)$ are periodic (say with period $\tau>0$ in all coordinate directions), $c(x)$ is symmetric and non-negative definite and $\nu(x,dy)$ is a  Borel kernel satisfying \begin{equation}\label{eq1.2}\sup_{x\in\R^{d}}\int_{\R^{d}}|y|^{2}\nu(x,dy)<\infty.\end{equation}
As we have commented, due to the rapidly oscillating nature of the coefficients of $\mathcal{A_\varepsilon}$,
 it is expected that the
microscopic behavior (heterogeneity)  can be averaged out to result a simpler model approximating
the behavior at the macroscopic level (the homogenization occurs). Indeed, as the main result of this paper, we prove that   as $\varepsilon\searrow0$, in the path space endowed with the Skorokhod topology, the homogenized process is a zero-drift Brownian motion  (or, equivalently, the homogenized operator is a zero-drift second-order
elliptic operator) determined by a covariance matrix (diffusion coefficient) of the form
$$\Bigg(\int_{[0,\tau]}c_{ij}(x)\pi(dx)+\int_{[0,\tau]}\int_{\R^{d}}y_iy_j\nu(x,dy)\pi(dx)\Bigg)_{1\leq
i,j\leq d},$$ where  $\pi(dx)$ is an invariant probability measure associated to the projection of $\process{F^{1}}$ on the $d$-dimensional torus $[0,\tau]^{d}$ (see Section \ref{s4} for details).
Note that, in view of the assumption in \eqref{eq1.2}, it is very natural to choose the diffusive scaling  in \eqref{eq1.1}. Namely,  $\process{F^{\varepsilon}}$ can be seen as $\{\varepsilon F_{\varepsilon^{-2}t}^{1}\}_{t\geq0}$ (both processes are generated by $\mathcal{A}_{\varepsilon}$) which naturally leads to a diffusive limit.

Further, observe that the process
$\process{F^{\varepsilon}}$ (operator $\mathcal{A}_{\varepsilon}$) can be seen as a classical (zero-drift) diffusion process (second-orderer elliptic operator) perturbed by a jump process (non-local  operator) which, in many situations, makes the whole  model more realistic. For example, such  processes (operators) are used to model population dynamics  also by taking
into account long-range effects (see \cite{car} and the references therein). In these models  the kernel $\nu(x,dy)$ is typically assumed to be a probability kernel  representing a  non-local
dispersal law of the population (for example, induced by  visual contact or chemical reaction).  Also, such models are nowadays a standard tool in representing the risk related to large market
movements (see \cite{cont}).
 Namely, due to the shortcomings of diffusion models in modeling these type of phenomena,  various  models (processes)
with jumps of type \eqref{eq1.1} were introduced, which allow for more realistic representation of
price dynamics and a greater flexibility in modeling.
 Observe that, from the practical point of view, in both contexts it is very natural to assume that the kernel $\nu(x,dy)$ has compact support or, equivalently, the corresponding diffusion with jumps  has uniformly bounded jumps.
 Hence, the condition in \eqref{eq1.2} is automatically satisfied.
 As we have already commented, the (small) parameter $\varepsilon>0$ characterizes the size of the cell of periodicity of the media. For example,  in the problem of forest migration  (see \cite{seed}) it represents the area of actively dispersed seeds (by animal cachers).
More precisely,   the process of active seed dispersion occurs on  scales much smaller than forest migration and, clearly,  it is natural to assume that the structure of the environment (cache areas) is more or less periodic. Also, note that the non-local effect in the model is reflected through disturbances
like wind, fire and avian dispersers.

The work in this paper is highly motivated
by  the works in \cite{benso-lions-book} and   \cite{bhat2}  in which the authors have considered the  homogenization of a diffusion   $\process{F^{\varepsilon}}$  generated by second-order
elliptic operator of the form
\begin{equation}\label{eq1.3}\mathcal{A}_\varepsilon f(x):=\frac{1}{\varepsilon}\sum_{i=1}^{d}b_i(x/\varepsilon)\frac{\partial f(x)}{\partial x_i}+\frac{1}{2}\sum_{i,j=1}^{d}c_{ij}(x/\varepsilon)\frac{\partial^{2}f(x)}{\partial x_i\partial x_j}.\end{equation}
Under the  assumptions  that the coefficients $b(x)=(b_i(x))_{1\leq i\leq d}$ and $c(x)=(c_{ij}(x))_{1\leq i,j\leq d}$ are periodic  and smooth enough (see Section \ref{s6} for details) and  $c(x)$ is symmetric and uniformly elliptic,     they have shown that
the  homogenized process (operator) of the diffusion   $\{ F^{\varepsilon}_{t}-\varepsilon^{-1}\bar{b}t\}_{t\geq0}$
(generator  $\mathcal{A}_\varepsilon f(x) -\frac{1}{\varepsilon}\sum_{i=1}^{d}\bar{b}_i\frac{\partial f(x)}{\partial x_i}$)
is a zero-drift Brownian motion (zero-drift second-order
elliptic operator) determined by a covariance matrix (diffusion coefficient)
 of the form
 \begin{equation}\label{eq1.4}\bar{\Sigma}:=\left(\int_{[0,\tau]^{d}}\sum_{k,l=1}^{d}\left(\delta_{ki}-\frac{\partial\beta_i(x)}{\partial x_k}\right)c_{kl}(x)\left(\delta_{lj}-\frac{\partial\beta_j(x)}{\partial x_l}\right)\pi(dx)\right)_{1\leq i,j\leq d}.\end{equation}
 Here, $\bar{b}:=(\bar{b}_i)_{1\leq i\leq d}$ for $\bar{b}_i:=\int_{[0,\tau]}b_i(x)\pi(dx)$,  $\pi(dx)$ is the unique invariant probability measure corresponding to the projection of $\process{F^{1}}$ on the $d$-dimensional torus $[0,\tau]^{d}$ (see Section \ref{s4} for details),
and  $\beta_i\in C^{2}(\R^{d})$, $i=1,\ldots,d,$ are unique $\tau$-periodic solutions to $\mathcal{A}_1\beta_i(x)=b_i(x)-\bar{b}_i$
(see Section \ref{s6} for details). For more
on the homogenization of local operators (methods and applications)   we refer the readers to the monographs \cite{Allaire},
\cite{benso-lions-book}, \cite{kozlov}, \cite{Piatnitski} and \cite{tartarII}.

In the closely related paper \cite{rhodes} the authors have considered the homogenization  of a one-dimensional  diffusion with jumps in  ergodic random environment $(\Omega,\mathcal{F},\mathbb{P},\{\tau_x\}_{x\in\R})$, generated by an operator of the form (in a fixed random environment $\omega\in\Omega$) \begin{align}\label{eq1.5}\mathcal{A}_\varepsilon^{\omega} f(x):=&\frac{1}{\varepsilon}b(\tau_{x/\varepsilon}\omega)f'(x)+
\frac{1}{2}c(\tau_{x/\varepsilon}\omega)f''(x)\nonumber\\&+\frac{1}{\varepsilon^{2}}\int_{\R}\left(f(\varepsilon y+x)-f(x)- \varepsilon y f'(x)1_{B(0,1)}(y)\right)\nu(\tau_{x/\varepsilon}\omega,dy),\end{align}
where    $\{\tau_x\}_{x\in\R}$ is  an ergodic group of measure
preserving transformations acting on the probability space $(\Omega,\mathcal{F},\mathbb{P})$. By assuming that $\int_{\R}|y|^{2}\nu(\omega,dy)<\infty$ for all $\omega\in\Omega$ (and certain regularity conditions on the coefficients of $\mathcal{A}_\varepsilon^{\omega}$), they have shown that the homogenized process  is a Brownian motion. Observe that if we remove the environment in \eqref{eq1.5}, the underlying process  becomes a L\'evy process (diffusion with jumps with constant coefficients) whose homogenization we discuss in Corollary \ref{c5.2}. Let us remark that the authors have also discussed the case when  $\int_{\R}|y|^{2}\nu(\omega,dy)=\infty$ for all $\omega\in\Omega$. Under a superdiffusive scaling (and certain regularity conditions on the coefficients of $\mathcal{A}_\varepsilon^{\omega}$), they have obtained that the homogenized  process is a pure-jump L\'evy process (it is generated by a purely non-local operator). Related to this, the homogenization of a diffusion with jumps  generated by
\begin{equation}\label{eq:x}\mathcal{A}_\varepsilon f(x):=\frac{1}{\varepsilon^{\alpha_0}}\int_{\R^{d}}\left(f(\varepsilon y+x)-f(x)-\varepsilon\sum_{i=1}^{d}y_i\frac{\partial f(x)}{\partial x_i}1_{B(0,1)}(y)\right)\frac{\beta(x/\varepsilon)}{|y|^{d+\alpha(x/\varepsilon)}}dy,\end{equation} where
$\alpha:\R^{d}\longrightarrow(0,2)$ and $\beta:\R^{d}\longrightarrow(0,\infty)$ are periodic, continuously differentiable and $\{x\in\R^{d}:\alpha(x)=\alpha_0:=\inf_{x\in\R^{d}}\alpha(x)\}$ has positive Lebesgue measure, has been considered in \cite{franke-periodic, franke-periodicerata} (by probabilistic methods) and, in the case of constant parameter $\alpha(x)$, in
\cite{ari} and \cite{schwabI} (by an analytic method, two-scale convergence method, which strongly relies on the fact that the parameter $\alpha(x)$ is constant,  scaling property of the underlying jump kernel and regularity properties of the solution of the corresponding boundary value problem). Note that in this situation the jump kernel $\nu(x,dy):=\beta(x)|y|^{-d-\alpha(x)}dy$ does not  satisfy anymore the  relation in \eqref{eq1.2}. Accordingly, the superdiffusive scaling $\varepsilon^{-\alpha_0}$ is used and the corresponding homogenized process  cannot   be anymore a Brownian motion  (the jumps are too big) but  it is a pure-jump L\'evy process (stable L\'evy-process) generated by
$$\mathcal{A}_0 f(x):=\int_{\R^{d}}\left(f( y+x)-f(x)-\sum_{i=1}^{d}y_i\frac{\partial f(x)}{\partial x_i}1_{B(0,1)}(y)\right)\frac{\beta_0}{|y|^{d+\alpha_0}}dy$$ for some constant  $\beta_0>0$.
 Finally, let us also remark that the homogenization of the operator in \eqref{eq:x} (with constant parameter $\alpha(x)$) has been also discussed in: (i) the  quasi-periodic and almost periodic setting (see \cite{ari2}) and (ii) the case when  $\{\beta(x)\}_{x\in\R^{d}}$ forms a stationary ergodic family defined on some probability space $(\Omega,\mathcal{F},\mathbb{P})$ endowed with  ergodic group of measure
preserving transformations $\{\tau_x\}_{x\in\R^{d}}$ acting on  $(\Omega,\mathcal{F},\mathbb{P})$ (see \cite{sch2}).
 Again, the techniques and methods used to resolve these problems
 strongly rely on the fact that the parameter $\alpha(x)$ is constant,  scaling property of the underlying jump kernel and regularity properties of the solution of the corresponding boundary value problem.

The main techniques used  in  \cite{benso-lions-book} and   \cite{bhat2} (which strongly rely on the fact that the underlying process is a diffusion  with smooth coefficients) are based on proving the  convergence as $\varepsilon\searrow0$ of finite-dimensional distributions of the   diffusion  $\process{F^{\varepsilon}}$ and functional central limit theorems for stationary ergodic sequences constructed from this process. On the other hand, our approach
 is through the  characteristics of  semimartingales. More precisely, to determine the homogenized process, we reduce the problem to the convergence of   semimartingale characteristics of the   process $\{ F^{\varepsilon}_{t}\}_{t\geq0}$.
 Namely,  by using the facts that $\{ F^{\varepsilon}_{t}\}_{t\geq0}$ is a  semimartingale whose characteristics are given in terms of the coefficients of the corresponding generator $\mathcal{A}_\varepsilon$
   (see \cite[Lemma 3.1 and Theorem 3.5]{rene-holder})  and the regularity assumptions imposed upon the coefficients  $c(x)$ and $\nu(x,dy)$, we show that the characteristics of $\{ F^{\varepsilon}_{t}\}_{t\geq0}$ as $\varepsilon\searrow0$  converge (in probability) to the   characteristics of a certain Brownian motion, which,
   according to \cite[Theorem VIII.2.17]{jacod}, proves the desired result.

The remainder of the paper is organized as follows.
In Section \ref{s2}, we give some preliminaries on diffusions with jumps and in Section \ref{s3} we state the main result of the paper, Theorem \ref{tm3.1}. In Section \ref{s4}, we provide some auxiliary results concerning periodic diffusions with jumps,
 in Section \ref{s5} we prove Theorem \ref{tm3.1} and in Section \ref{s6} we discuss the non-zero drift case. Finally, in Section \ref{s7}, we give an application of the presented homogenization results to the long-time behavior (transience, recurrence and ergodicity) of periodic diffusions with small jumps.

\section{Preliminaries on Diffusions with Jumps} \label{s2}
 Let
$(\Omega,\mathcal{F},\{\mathbb{P}_{x}\}_{x\in\R^{d}},\process{\mathcal{F}},\process{\theta},\process{M})$, denoted by $\process{M}$
in the sequel, be a Markov process with  state space
$(\R^{d},\mathcal{B}(\R^{d}))$, where $d\geq1$ and
$\mathcal{B}(\R^{d})$ denotes the Borel $\sigma$-algebra on
$\R^{d}$. Due to the Markov property, the associated family of linear operators $\process{P}$ on
$B_b(\R^{d})$ (the space of bounded and Borel measurable functions),
defined by $$P_tf(x):= \mathbb{E}_{x}[f(M_t)],\quad t\geq0,\
x\in\R^{d},\ f\in B_b(\R^{d}),$$  forms a \emph{semigroup}  on the
Banach space $(B_b(\R^{d}),||\cdot||_\infty)$, that is, $P_s\circ
P_t=P_{s+t}$ and $P_0=I$ for all $s,t\geq0$. Here, $\mathbb{E}_x$ stands for the expectation with respect to $\mathbb{P}_x(d\omega)$, $x\in\R^{d}$, and
$||\cdot||_\infty$ denotes the supremum norm on the space
$B_b(\R^{d})$. Moreover, the semigroup $\process{P}$ is
\emph{contractive} ($||P_tf||_{\infty}\leq||f||_{\infty}$
for all $t\geq0$ and all $f\in B_b(\R^{d})$) and \emph{positivity
preserving} ($P_tf\geq 0$ for all $t\geq0$ and all $f\in
B_b(\R^{d})$ satisfying $f\geq0$). The \emph{infinitesimal generator}
$(\mathcal{A}^{b},\mathcal{D}_{\mathcal{A}^{b}})$ of the semigroup
$\process{P}$ (or of the process $\process{M}$) is a linear operator
$\mathcal{A}^{b}:\mathcal{D}_{\mathcal{A}^{b}}\longrightarrow B_b(\R^{d})$
defined by
$$\mathcal{A}^{b}f:=
  \lim_{t\longrightarrow0}\frac{P_tf-f}{t},\quad f\in\mathcal{D}_{\mathcal{A}^{b}}:=\left\{f\in B_b(\R^{d}):
\lim_{t\longrightarrow0}\frac{P_t f-f}{t} \ \textrm{exists in}\
||\cdot||_\infty\right\}.
$$ We call $(\mathcal{A}^{b},\mathcal{D}_{\mathcal{A}^{b}})$ the \emph{$B_b$-generator} for short.
A Markov process $\process{M}$ is said to be a \emph{Feller process}
if its corresponding  semigroup $\process{P}$ forms a \emph{Feller
semigroup}. This means that
\begin{itemize}
  \item [(i)] $\process{P}$ enjoys the \emph{Feller property}, that is,  $P_t(C_\infty(\R^{d}))\subseteq C_\infty(\R^{d})$ for all $t\geq0$;
  \item [(ii)] $\process{P}$ is \emph{strongly continuous}, that is, $\lim_{t\longrightarrow0}||P_tf-f||_{\infty}=0$ for all $f\in
  C_\infty(\R^{d})$.
\end{itemize}
 Here, $C_\infty(\R^{d})$ denotes
the space of continuous functions vanishing at infinity.  Note that
every Feller semigroup $\process{P}$  can be uniquely extended to
$B_b(\R^{d})$ (see \cite[Section 3]{rene-conserv}). For notational
simplicity, we denote this extension again by $\process{P}$. Also,
let us remark that every Feller process possesses the strong Markov
property and has c\`adl\`ag sample paths (see  \cite[Theorems 3.4.19 and
3.5.14]{jacobIII}).   Further,
in the case of Feller processes, we call
$(\mathcal{A}^{\infty},\mathcal{D}_{\mathcal{A}^{\infty}}):=(\mathcal{A}^{b},\mathcal{D}_{\mathcal{A}^{b}}\cap
C_\infty(\R^{d}))$ the \emph{Feller generator} for short. Observe
that, in this case, $\mathcal{D}_{\mathcal{A}^{\infty}}\subseteq
C_\infty(\R^{\bar{d}})$ and
$\mathcal{A}^{\infty}(\mathcal{D}_{\mathcal{A}^{\infty}})\subseteq
C_\infty(\R^{d})$. If the set of smooth functions
with compact support $C_c^{\infty}(\R^{d})$ is contained in
$\mathcal{D}_{\mathcal{A}^{\infty}}$, that is, if the Feller
generator
$(\mathcal{A}^{\infty},\mathcal{D}_{\mathcal{A}^{\infty}})$ of the
Feller process $\process{M}$ satisfies
    \begin{description}
      \item[(\textbf{C1})]
      $C_c^{\infty}(\R^{d})\subseteq\mathcal{D}_{\mathcal{A}^{\infty}}$,
    \end{description}
 then, according to \cite[Theorem 3.4]{courrege-symbol},
$\mathcal{A}^{\infty}|_{C_c^{\infty}(\R^{d})}$ is a \emph{pseudo-differential
operator}, that is, it can be written in the form
\begin{equation}\label{eq2.1}\mathcal{A}^{\infty}|_{C_c^{\infty}(\R^{d})}f(x) = -\int_{\R^{d}}q(x,\xi)e^{i\langle \xi,x\rangle}
\hat{f}(\xi) d\xi,\end{equation}  where $\hat{f}(\xi):=
(2\pi)^{-d} \int_{\R^{d}} e^{-i\langle\xi,x\rangle} f(x) dx$ denotes
the Fourier transform of the function $f(x)$. The function $q :
\R^{d}\times \R^{d}\longrightarrow \CC$ is called  the \emph{symbol}
of the pseudo-differential operator. It is measurable and locally
bounded in $(x,\xi)$ and continuous and negative definite as a
function of $\xi$. Hence, by \cite[Theorem 3.7.7]{jacobI}, the
function $\xi\longmapsto q(x,\xi)$ has for each $x\in\R^{d}$ the
following L\'{e}vy-Khintchine representation \begin{equation}\label{eq2.2}q(x,\xi) =a(x)-
i\langle \xi,b(x)\rangle + \frac{1}{2}\langle\xi,c(x)\xi\rangle +
\int_{\R^{d}}\left(1-e^{i\langle\xi,y\rangle}+i\langle\xi,y\rangle1_{B(0,1)}(y)\right)\nu(x,dy),\end{equation}
where  $a(x)$ is a non-negative Borel measurable function, $b(x)$ is
an $\R^{d}$-valued Borel measurable function,
$c(x):=(c_{ij}(x))_{1\leq i,j\leq d}$ is a symmetric non-negative
definite $d\times d$ matrix-valued Borel measurable function
 and $\nu(x,dy)$ is a Borel kernel on $\R^{d}\times
\mathcal{B}(\R^{d})$, called the \emph{L\'evy measure}, satisfying
$$\nu(x,\{0\})=0\quad \textrm{and} \quad \int_{\R^{d}}\left(1\wedge
|y|^{2}\right)\nu(x,dy)<\infty,\quad x\in\R^{d}.$$
The quadruple
$(a(x),b(x),c(x),\nu(x,dy))$ is called the \emph{L\'{e}vy quadruple}
of the pseudo-differential operator
$\mathcal{A}^{\infty}|_{C_c^{\infty}(\R^{d})}$ (or of the symbol $q(x,\xi)$).
Let us remark that the local boundedness of $q(x,\xi)$  implies the local boundedness of the corresponding coefficients
 (see \cite[Lemma 2.1 and Remark 2.2]{rene-holder}).
In the sequel, we assume the following condition on the symbol
$q(x,\xi)$:
\begin{description}
  \item[(\textbf{C2})] $q(x,0)=a(x)=0$ for all $x\in\R^{d}.$
\end{description}
This condition is closely related to the conservation property of $\process{M}$.
Namely, under  the assumption that the coefficients of $q(x,\xi)$ are uniformly bounded (which is certainly the case in the periodic setting), (\textbf{C2}) implies that $\process{M}$ is \emph{conservative}, that is, $\mathbb{P}_{x}(M_t\in\R^{d})=1$ for all $t\geq0$ and all
$x\in\R^{d}$ (see \cite[Theorem 5.2]{rene-conserv}). Further, note that by combining (\ref{eq2.1}), (\ref{eq2.2}) and (\textbf{C2}),  $\mathcal{A}^{\infty}|_{C_c^{\infty}(\R^{d})}$ has a representation as an integro-differential operator (\ref{eq1.1}).
Also note that in the case when the symbol $q(x,\xi)$ does not depend
on the variable $x\in\R^{d}$, $\process{M}$ becomes a \emph{L\'evy
process}, that is, a stochastic process   with stationary and
independent increments and c\`adl\`ag sample paths. Moreover, unlike Feller
processes, every L\'evy process is uniquely and completely
characterized through its corresponding symbol (see \cite[Theorems
7.10 and 8.1]{sato-book}). According to this, it is not hard to
check that every L\'evy process satisfies conditions
(\textbf{C1}) and (\textbf{C2})  (see \cite[Theorem 31.5]{sato-book}).
Thus, the class of processes we consider in this paper contains a subclass
 of L\'evy processes. Throughout this paper, the symbol $\process{F}$ denotes a Feller
process satisfying conditions (\textbf{C1}) and (\textbf{C2}). Such a
process is called a \emph{diffusion process with jumps}.
If $\nu(x,dy)=0$ for all $x\in\R^{d}$, then $\process{F}$ is just called a \emph{diffusion process}.  Note that this definition agrees with the standard definition of diffusions (see \cite{rogersI}).
 For more
on diffusions with jumps  we refer the readers to the monograph
\cite{rene-bjorn-jian}.

\section{Main Results}
\label{s3} Before stating the main results of this paper, we  introduce some notation we need.
Let $\tau:=(\tau_1,\ldots, \tau_{d})\in (0,\infty)^{d}$ be
fixed and let $\tau\ZZ^{d}:=\tau_1\ZZ\times\ldots\times\tau_{d} \ZZ.$
For $x\in\R^{d}$, define
$$x_\tau:=\{y\in\R^{d}:x-y\in\tau\ZZ^{d}\}\quad\textrm{and}\quad
\R^{d}/\tau\ZZ^{d}:=\{x_\tau:x\in\R^{d}\}.$$ Clearly,
$\R^{d}/\tau\ZZ^{d}$ is obtained
 by identifying the opposite
faces of $[0,\tau]:=[0,\tau_1]\times\ldots\times[0,\tau_{d}]$.
Next,
let
 $\Pi_{\tau} : \R^{d}\longrightarrow [0,\tau]$, $\Pi_{\tau}(x):=x_\tau$, be the covering map. A
function $f:\R^{d}\longrightarrow\R$ is called $\tau$-periodic if
$$f\circ\Pi_\tau(x)=f(x),\quad x\in\R^{d}.$$
For notational convenience, from now on we will omit the subscript $\tau$ and simply write $x$ instead of $x_\tau$.
Now, we are in position to state the main results of this paper, the proofs of which  are given in
Section \ref{s5}.
\begin{theorem}\label{tm3.1}Let  $\process{F}$ be a $d$-dimensional diffusion with jumps with  semigroup $\process{P}$, symbol $q(x,\xi)$ and L\'evy triplet $(0,c(x),\nu(x,dy))$,
  which satisfy:
 \begin{description}
     \item [(\textbf{C3})] $\process{F}$ possesses a transition
density function $p(t,x,y)$, that is,
$$P_tf(x)=\int_{\R^{d}}f(y)p(t,x,y)dy,\quad t>0,\ x\in\R^{d},\ f\in B_b(\R^{d}),$$ such that  $(x,y)\longmapsto p(t,x,y)$ is continuous and $p(t,x,y)>0$
for all $t>0$ and all $x,y\in \R^{d}$;
     \item [(\textbf{C4})] the function $x\longmapsto q(x,\xi)$ (or, equivalently, the  coefficients $c(x)$ and $\nu(x,dy)$) is $\tau$-periodic for all $\xi\in\R^{d}$;
     \item [(\textbf{C5})]  $\displaystyle\sup_{x\in[0,\tau]}\int_{\R^{d}}|y|^{2}\nu(x,dy)<\infty;$
   \item [(\textbf{C6})] $\displaystyle\lim_{\varepsilon\longrightarrow0}\sup_{x\in[0,\tau]}\left|\int_{B(0,\varepsilon^{-1})\setminus B(0,1)}y_i\,\nu(x,dy)\right|=0\quad \textrm{for all}\quad i=1,\ldots,d$.
   \end{description}
Then, for any initial
distribution  of $\process{F}$,
\begin{equation}\label{eq3.1}\left\{\varepsilon F_{\varepsilon^{-2}t}\right\}_{t\geq0}\xrightarrow[\varepsilon\searrow0]{d}\process{W}.\end{equation}
 Here, $\stackrel{\hbox{\scriptsize{$\textrm{d}$}}}{\longrightarrow}$ denotes the convergence in the space of c\`adl\`ag functions
endowed with the Skorohod  topology,
$\process{W}$ is a $d$-dimensional zero-drift Brownian motion starting from the origin and
determined by a covariance matrix  of the form
\begin{equation} \label{eq3.2} \Sigma:=\Bigg(\int_{[0,\tau]}c_{ij}(z)\pi(dz)+\int_{[0,\tau]}\int_{\R^{d}}y_iy_j\nu(z,dy)\pi(dz)\Bigg)_{1\leq
i,j\leq d}
\end{equation} and  $\pi(dx)$ is an invariant probability measure associated to the projection of $\process{F}$, with respect to $\Pi_{\tau}(x)$, on $[0,\tau]$ (see Section \ref{s4} for details).
\end{theorem}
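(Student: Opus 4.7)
The plan is to follow the semimartingale-characteristics route already sketched in the introduction: realize $F^\varepsilon_t := \varepsilon F_{\varepsilon^{-2} t}$ as a semimartingale via \cite[Lemma 3.1 and Theorem 3.5]{rene-holder}, write its characteristics explicitly in terms of the coefficients of the generator $\mathcal{A}_\varepsilon$ in \eqref{eq1.1}, and upgrade convergence of those characteristics to the Skorohod convergence \eqref{eq3.1} by invoking \cite[Theorem VIII.2.17]{jacod} with target a Brownian motion of covariance $\Sigma$.

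Concretely, a change of variables $z=\varepsilon y$ in the symbol shows that, with the standard truncation $h(z)=z\mathbf{1}_{B(0,1)}(z)$, the L\'evy triplet of $F^\varepsilon$ at a point $x$ is $(b_\varepsilon(x),\,c(x/\varepsilon),\,N_\varepsilon(x,dz))$ with
\[
b_\varepsilon(x)=\varepsilon^{-1}\int_{B(0,\varepsilon^{-1})\setminus B(0,1)}y\,\nu(x/\varepsilon,dy),\qquad N_\varepsilon(x,A)=\varepsilon^{-2}\nu(x/\varepsilon,\varepsilon^{-1}A).
\]
Using $F^\varepsilon_s/\varepsilon=F_{\varepsilon^{-2}s}$ and the time change $u=\varepsilon^{-2}s$, the first characteristic is $B^\varepsilon_t=\int_0^t b_\varepsilon(F^\varepsilon_s)\,ds$ and the modified second characteristic rewrites as
\[
\widetilde C^{\varepsilon,ij}_t=\varepsilon^{2}\int_0^{\varepsilon^{-2}t}\Bigl[c_{ij}(F_u)+\int_{|y|\leq\varepsilon^{-1}}y_iy_j\,\nu(F_u,dy)\Bigr]du,
\]
i.e., a long-time average of a bounded $\tau$-periodic integrand along $\{F_u\}_{u\ge 0}$.

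For the drift, the key observation is that (C6), read by dominated convergence as $\varepsilon\to 0$, is equivalent to the centering identity $\int_{|y|\geq 1}y\,\nu(u,dy)=0$ for every $u\in[0,\tau]$; combined with (C5) and the uniform tightness of the family $\{|y|^{2}\nu(u,dy)\}_{u\in[0,\tau]}$ (which follows from (C5), (C3), and the compactness of the torus), this forces $\sup_{x}|b_\varepsilon(x)|\to 0$, hence $B^\varepsilon_t\to 0$ uniformly on compact time intervals in probability. For $\widetilde C^\varepsilon$, the ergodic theorem supplied by the Section \ref{s4} results, applied to the bounded $\tau$-periodic function $g_{ij}(u):=c_{ij}(u)+\int y_iy_j\,\nu(u,dy)$, yields $\widetilde C^{\varepsilon,ij}_t\to t\,\Sigma_{ij}$ in $\mathbb{P}_x$-probability for every initial $x$; the truncation $\mathbf{1}_{|y|\leq\varepsilon^{-1}}$ is absorbed by the same uniform tightness estimate. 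The vanishing of large jumps $\int_0^t N_\varepsilon(F^\varepsilon_s,\{|z|>\delta\})\,ds\to 0$ for each $\delta>0$ then follows from the Chebyshev bound $N_\varepsilon(x,\{|z|>\delta\})\leq\delta^{-2}\int_{|y|>\delta/\varepsilon}|y|^{2}\,\nu(x/\varepsilon,dy)$ together with the same tightness.

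The three convergences above verify the hypotheses of \cite[Theorem VIII.2.17]{jacod} with limit the Brownian motion $W$ of covariance $\Sigma$ in \eqref{eq3.2}, concluding the proof; the extension from a single initial distribution to an arbitrary one is immediate since the limits of the characteristics are deterministic and the ergodic theorem holds from every starting point thanks to (C3). The main obstacle is the drift analysis: the expression for $b_\varepsilon$ carries an explicit factor $\varepsilon^{-1}$ with no manifest reason to vanish, and it is only the centering encoded in (C6) together with sufficient tail control of $|y|^{2}\nu(u,dy)$ that makes $b_\varepsilon$ small. Extracting this tail control cleanly from the minimal regularity available in (C3)--(C5) and the periodic structure is the technical heart of the argument; once it is in place, the remaining pieces reduce to the ergodic theorem from Section \ref{s4} and Chebyshev's inequality.
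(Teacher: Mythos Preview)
Your overall strategy---verify the hypotheses of \cite[Theorem VIII.2.17]{jacod} for the characteristics of $F^\varepsilon$---is exactly the paper's. The gap is in the drift analysis. You try to show $\sup_x|b_\varepsilon(x)|\to 0$, reducing (correctly, via the centering $\int_{|y|\ge 1}y\,\nu(u,dy)=0$ implied by (\textbf{C6})) to the uniform tail estimate $\sup_{u\in[0,\tau]}\int_{|y|\ge R}|y|^2\nu(u,dy)\to 0$ as $R\to\infty$. You assert this ``follows from (C5), (C3), and the compactness of the torus'', but this is not justified: (\textbf{C5}) gives only a uniform \emph{bound} on $\int|y|^2\nu(u,dy)$, and (\textbf{C3}) concerns the transition density, not regularity of $u\mapsto\nu(u,\cdot)$. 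Without weak continuity of the latter, uniform tightness can fail---e.g.\ the symmetric kernels $\nu(u,dy)=\tfrac{u^2}{2}(\delta_{1/u}+\delta_{-1/u})$ on $(0,\tau]$ satisfy (\textbf{C5}) and (\textbf{C6}) but $\sup_u\int_{|y|>R}|y|^2\nu(u,dy)=1$ for every $R$. You identify this as ``the technical heart of the argument'' but leave it open; as stated it does not go through.

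The paper sidesteps this entirely by never bounding $\sup_x|b_\varepsilon(x)|$. Instead each characteristic is split as ``centered part plus $\pi$-mean'': with $U^\varepsilon(x)=\varepsilon^{-1}\int_{1\le|y|<\varepsilon^{-1}\delta}y_i\bigl(\nu(x,dy)-\int_{[0,\tau]}\nu(z,dy)\pi(dz)\bigr)$, the Doeblin estimate \eqref{eq4.5} from Section~\ref{s4} gives the quantitative $L^2$ bound
\[
\mathbb{E}_\mu\Bigl[\Bigl(\int_0^t U^\varepsilon(F^\tau_{\varepsilon^{-2}s})\,ds\Bigr)^{\!2}\Bigr]\le \frac{4C\,\varepsilon^2 t}{c}\,\|U^\varepsilon\|_\infty^2,
\]
so the dangerous $\varepsilon^{-1}$ hidden in $U^\varepsilon$ is killed by the $\varepsilon^2$ coming from the exponential mixing rate, and the residual factor is exactly $\sup_x\bigl|\int_{1\le|y|<\varepsilon^{-1}\delta}y_i\,\nu(x,dy)\bigr|$, which vanishes by (\textbf{C6}). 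The deterministic $\pi$-mean piece, on the other hand, involves only $\int_{[0,\tau]}\int_{|y|\ge \varepsilon^{-1}\delta}|y|^2\nu(z,dy)\,\pi(dz)$, which tends to $0$ by dominated convergence from (\textbf{C5}); no uniform tightness is needed. In short, the ``ergodic theorem supplied by Section~\ref{s4}'' that you invoke is qualitatively the right input, but you use only its asymptotic statement; what the proof actually requires is its quantitative exponential rate, and that is precisely what defeats the $\varepsilon^{-1}$ in the drift.

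Two minor points. Your truncation $h(z)=z\,1_{B(0,1)}(z)$ is not continuous, whereas the semimartingale characteristics (and Jacod's theorem) require a continuous truncation; the paper works with a smooth $h$ equal to the identity on $B(0,\delta)$, which introduces an extra ``$|y|>\varepsilon^{-1}\delta$'' remainder term that is handled by the same mechanism. And for $\widetilde C^\varepsilon$ the paper again uses the centering-plus-rate argument rather than the bare ergodic theorem, since the integrand $\int_{|y|\le\varepsilon^{-1}\delta}y_iy_j\,\nu(\cdot,dy)$ depends on $\varepsilon$.
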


\begin{remark}\label{r3.2}{\rm
In Theorem \ref{tm3.1} we assume that the drift term vanishes, which is a
crucial assumption.  Namely,
as we commented in the first section, in  \cite{benso-lions-book} and  \cite{bhat2} the authors have considered the homogenization of a $d$-dimensional diffusion  $\{\varepsilon F_{\varepsilon^{-2}t}\}_{t\geq0}$  generated by a second-order elliptic operator \eqref{eq1.3} with not necessarily vanishing drift term.  More precisely, by using different approaches (which strongly rely on the fact that the underlying process is a diffusion with smooth coefficients), they have proved
that: (i) the family of  diffusions $\left\{\varepsilon F_{\varepsilon^{-2}t}-\varepsilon^{-1}\bar{b}t\right\}_{t\geq0}$, $\varepsilon>0$, is tight  and (ii) the finite-dimensional distributions of  $\left\{\varepsilon F_{\varepsilon^{-2}t}-\varepsilon^{-1}\bar{b}t\right\}_{t\geq0}$ converge to those  of $\process{\bar{W}}$, as $\varepsilon\searrow0$. Recall, $\bar{b}=(\int_{[0,\tau]}b_i(x)\pi(dx))_{1\leq i\leq d}$ and  $\process{\bar{W}}$ is a zero-drift Brownian motion determined by the covariance matrix given in \eqref{eq1.4}. Note that, due to  Prokhorov's theorem, (i) and (ii) are equivalent to \begin{equation}\label{eq3}\left\{\varepsilon F_{\varepsilon^{-2}t}-\varepsilon^{-1}\bar{b}t\right\}_{t\geq0}\xrightarrow[\varepsilon\searrow0]{d}\process{\bar{W}}.\end{equation}
On the other hand, the approach through the characteristics of semimartingales gives only sufficient conditions for the convergence in the Skorokhod topology and it is too weak to deal with the non-zero drift case. Namely,  by using this approach, in Section \ref{s6} we prove  that the relation in \eqref{eq3} (for a diffusion with jumps  satisfying (\textbf{C3})-(\textbf{C6})) holds if, and only if, $b(x)=\bar{b}$  ($dx$-a.e.).
}
\end{remark}

 \begin{remark}\label{r3.3}
{\rm Note that  (non-degenerate) diffusions  always satisfy the assumptions in   (\textbf{C3}) (see  \cite{rogersI} and
\cite[Theorem A]{sheu}). Hence, Theorem \ref{tm3.1} generalizes the results related to diffusions (with zero-drift term), presented in
\cite{benso-lions-book} and \cite{bhat2}.}
\end{remark}

\begin{remark}\label{r3.4} {\rm In (\textbf{C3})  we assume the existence, continuity (in space variables) and strict positivity of the transition density function $p(t,x,y)$ of $\process{F}$.  According to \cite[Theorem 2.6]{sandric-tams}, under  the assumptions that $q(x,\xi)$ has bounded coefficients  (which is certainly the case in the periodic setting) and $q(x,\xi)={\rm Re}\,q(x,\xi)$ for all $x,\xi\in\R^{d}$ (or, equivalently,  $b(x)=0$
and $\nu(x,dy)$ is a symmetric measure for all $x\in\R^{d}$),
the existence  of $p(t,x,y)$  follows from
\begin{equation}\label{eq3.3}\int_{\R^{d}}\exp\left[-t\, \inf_{x\in\R^{d}}q(x,\xi)\right]d\xi<\infty,\quad t>0.\end{equation}
  According to \cite{friedman} and \cite[Theorems 7.10 and
8.1]{sato-book}, in the L\'evy process and diffusion case, in order to ensure the
existence of a transition density function,  the  assumption that the symbol is real is not essential.
 Further, note that $(\ref{eq3.3})$ (together with the assumptions that the symbol  has bounded coefficients and it is real)  also implies the continuity of $(x,y)\longmapsto p(t,x,y)$ for all $t>0$. First, under these assumptions,
\cite[Theorems 1.1 and 1.2]{rene-wang-feller} entail that
$$\sup_{x\in\R^{d}}\left|\mathbb{E}^{x}\left[e^{i\xi(F_t-x)}\right]\right|\leq\exp\left[-\frac{t}{16}\inf_{x\in\R^{d}}\it{q}(x,\rm{2}\xi)\right],\quad
t>0,\ \xi\in\R^{d},$$ and that the  function
$x\longmapsto\mathbb{E}^{x}\left[e^{i\xi(F_t-x)}\right]$ is
continuous for all $\xi\in\R^{d}$.
Consequently,  \cite[Proposition
2.5]{sato-book} states that $p(t,x,y)$ exists and it is given by
$$p(t,x,y)=(2\pi)^{-d}\int_{\R^{d}}e^{-i\xi(y-x)}\mathbb{E}^{x}\left[e^{i\xi(F_t-x)}\right]d\xi,\quad
t>0,\ x,y\in\R^{d}.$$ Finally, the continuity of
$(x,y)\longmapsto p(t,x,y)$, $t>0$, follows directly by employing the dominated convergence
theorem. Observe that \eqref{eq3.3} is trivially satisfied in the case when the diffusion coefficient is uniformly elliptic.
On the other hand, the strict positivity of the transition density function
$p(t,x,y)$ is a more complex problem. In the  L\'evy process and
diffusion case this problem has been considered in \cite{brockett},
\cite{rajput}, \cite{friedman}, \cite{sharpe}  and \cite{sheu}. In
the general case, when  $\process{F}$ is a diffusion with jumps with  bounded coefficients and real symbol satisfying \ $(\ref{eq3.3})$, the best we were able to prove is given in \cite[Proposition 6.1]{turbul} and it reads as follows:
 $\process{F}$ possesses a transition
density function $p(t,x,y)$ such that for any
$t_0>0$ and any $n\geq1$ there exists $\varepsilon(t_0)>0$ such that $p(t,x,y+x)>0$
for all $t\in[nt_0,n(t_0+1)]$, all $x\in\R^{d}$ and all
$y\in B(0,n\varepsilon(t_0))$.}
 \end{remark}

As a direct  consequence of Theorem \ref{tm3.1} and \cite[Theorem 7.1]{rene-bjorn-jian} we get the following result.
\begin{corollary}\label{c3.5}
Let $\process{F}$ be a $d$-dimensional diffusion with jumps with semigroup $\process{P}$ and symbol $q(x,\xi)$, satisfying conditions
(\textbf{C3})-(\textbf{C6}). Then, for $\varepsilon>0$, $\{\varepsilon F_{\varepsilon^{-2}t}\}_{t\geq0}$ is a diffusion with jumps determined by symbol of the form
 $q^{\varepsilon}(x,\xi):=\varepsilon^{-2}q(x/\varepsilon,\varepsilon\xi).$
 Further, let $\process{P^{\varepsilon}}$ and $(\mathcal{A}^{\infty}_{\varepsilon},\mathcal{D}_{\mathcal{A}^{\infty}_{\varepsilon}})$, $\varepsilon>0$, and $\process{P^{0}}$ and $(\mathcal{A}^{\infty}_{0},\mathcal{D}_{\mathcal{A}^{\infty}_{0}})$ be the corresponding Feller semigroups and generators of $\{\varepsilon F_{\varepsilon^{-2}t}\}_{t\geq0}$, $\varepsilon>0$, and  $\process{W}$, respectively.
Then,
$$\lim_{\varepsilon\longrightarrow0}\sup_{t\leq t_0}||P_t^{\varepsilon}f-P_t^{0}f||_{\infty}=0,\quad t_0\geq0,\ f\in C_{\infty}(\R^{d}),$$ and
for each $f\in C_c^{\infty}(\R^{d})$ there exist functions $f_\varepsilon\in\mathcal{D}_{\mathcal{A}^{\varepsilon}}$, $\varepsilon>0$, such that $$\lim_{\varepsilon\longrightarrow0}\left(||f_\varepsilon-f||_{\infty}+||\mathcal{A}^{\infty}_{\varepsilon}f_\varepsilon-\mathcal{A}^{\infty}_{0}f||_{\infty}\right)=0.$$
\end{corollary}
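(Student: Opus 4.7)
The plan is to handle the two conclusions separately, with the bulk of the work consisting of a rescaling computation; the convergences then follow from a single citation to the abstract Trotter--Kato--Kurtz-type result \cite[Theorem 7.1]{rene-bjorn-jian}.

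For the first assertion, that $\{\varepsilon F_{\varepsilon^{-2}t}\}_{t\geq 0}$ is a diffusion with jumps with symbol $q^{\varepsilon}(x,\xi)=\varepsilon^{-2}q(x/\varepsilon,\varepsilon\xi)$, I would proceed by a standard time-space rescaling. Writing
$P_{t}^{\varepsilon}f(x)=\mathbb{E}_{x/\varepsilon}[f(\varepsilon F_{\varepsilon^{-2}t})]$
and setting $g(y):=f(\varepsilon y)$ for $f\in C_{c}^{\infty}(\R^{d})$, one differentiates at $t=0$ to obtain $\mathcal{A}_{\varepsilon}^{\infty}f(x)=\varepsilon^{-2}(\mathcal{A}^{\infty}g)(x/\varepsilon)$. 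Inserting the pseudo-differential representation \eqref{eq2.1}--\eqref{eq2.2} for $\mathcal{A}^{\infty}$, using the Fourier scaling $\widehat{f(\varepsilon\,\cdot)}(\eta)=\varepsilon^{-d}\hat{f}(\eta/\varepsilon)$, and changing variables $\xi=\eta/\varepsilon$ identifies $q^{\varepsilon}(x,\xi)=\varepsilon^{-2}q(x/\varepsilon,\varepsilon\xi)$. The Feller property of $\{\varepsilon F_{\varepsilon^{-2}t}\}_{t\geq 0}$ is inherited from that of $\process{F}$ because the map $f\mapsto f(\varepsilon\,\cdot)$ is an isometry of $C_{\infty}(\R^{d})$ that commutes with the rescaled semigroup; conditions (\textbf{C1}) and (\textbf{C2}) are preserved since $C_{c}^{\infty}(\R^{d})$ is closed under the same map and $q^{\varepsilon}(x,0)=\varepsilon^{-2}q(x/\varepsilon,0)=0$. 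Carrying out the same substitution directly on \eqref{eq1.1} yields the integro-differential form with coefficients $c(x/\varepsilon)$ and $\varepsilon^{-2}$-scaled jump measure, consistent with the claimed symbol.

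For the remaining two convergences, I would invoke \cite[Theorem 7.1]{rene-bjorn-jian}: for Feller processes, weak convergence in the Skorokhod topology for every initial distribution is equivalent to uniform convergence of the associated semigroups on $C_{\infty}(\R^{d})$ over compact time intervals, and further to the existence, for every $f$ in a core of the limit generator, of an approximating sequence $f_{\varepsilon}\in\mathcal{D}_{\mathcal{A}_{\varepsilon}^{\infty}}$ with $f_{\varepsilon}\to f$ and $\mathcal{A}_{\varepsilon}^{\infty}f_{\varepsilon}\to\mathcal{A}_{0}^{\infty}f$ uniformly. Theorem \ref{tm3.1} provides exactly the Skorokhod convergence hypothesis for every initial law, and since $C_{c}^{\infty}(\R^{d})$ is a core for the Brownian generator $\frac{1}{2}\sum_{i,j}\Sigma_{ij}\partial_{i}\partial_{j}$, both conclusions drop out immediately.

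The proof is essentially a two-line application of the cited abstract theorem once the first assertion is in hand, so there is no genuine obstacle; the only point requiring any care is the routine, but slightly tedious, bookkeeping in the symbol rescaling (keeping track of the Fourier change of variable and checking that the resulting operator has the correct integro-differential form on $C_{c}^{\infty}(\R^{d})$). Everything else is a direct citation.
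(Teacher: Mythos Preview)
Your proposal is correct and matches the paper's approach exactly: the paper states the corollary ``as a direct consequence of Theorem \ref{tm3.1} and \cite[Theorem 7.1]{rene-bjorn-jian}'' and gives no further proof. Your added bookkeeping on the symbol rescaling simply fills in details the paper leaves implicit.
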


\begin{definition}\label{d3.6}Let $O\subseteq\R^{d}$, $d\geq2$, be a connected open set. The set $O$ satisfies the \emph{Poincar\'{e} cone
condition} at $x\in\partial O$ if there exists a cone $C$ based at $x$ with opening angle $\varphi>0$, and
$r > 0$ such that $C\cap B(x,r)\subseteq O^{c}$. Here, $\partial O$ denotes the boundary of the set $O$.
\end{definition}

 \begin{theorem}\label{tm3.7}Let  $\process{F}$ be a $d$-dimensional diffusion with jumps  satisfying conditions (\textbf{C3})-(\textbf{C6}) and let
 $\process{W}$ be a   zero-drift Brownian motion
determined by the covariance matrix $\Sigma$ (given in \eqref{eq3.2}). Further, let $O\subseteq\R^{d}$, $d\geq2$, be an open set such that $\partial O$ has Lebesgue measure zero and $O$ and ${\rm Int}\,O^{c}$ (interior of the set $O^{c}$) are connected and satisfy the Poincar\'{e} cone
condition on $\partial O$. If $d=1$, just assume that $O\subseteq\R$ is open.
For $\varepsilon>0$, define $T^{\varepsilon}:=\inf\{t>0:\varepsilon F_{\varepsilon^{-2}t}\notin O\}$ and  $T^{0}:=\inf\{t>0:W_{t}\notin O\}$.
 Then, for any initial
distribution  of $\process{F}$,
\begin{equation}\label{eq3.7}\varepsilon F_{\varepsilon^{-2} T^{\varepsilon}}\xrightarrow[\varepsilon\searrow0]{d}W_{T^{0}}\end{equation}
and
\begin{equation}\label{eq3.8}\left\{\varepsilon F_{\varepsilon^{-2}(t\wedge T^{\varepsilon})}\right\}_{t\geq0}\xrightarrow[\varepsilon\searrow0]{d}\{W_{t\wedge T^{0}}\}_{t\geq0}.\end{equation}
\end{theorem}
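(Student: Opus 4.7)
The plan is to deduce Theorem~\ref{tm3.7} from Theorem~\ref{tm3.1} by an application of the continuous mapping theorem to the exit-time and stopped-process functionals on the Skorokhod space. Set $X^{\varepsilon}_{t}:=\varepsilon F_{\varepsilon^{-2}t}$ and, for $w\in D([0,\infty),\R^{d})$, define
\[
T(w):=\inf\{t>0:w_{t}\notin O\},\qquad T'(w):=\inf\{t>0:w_{t}\in\mathrm{Int}\,O^{c}\},
\]
together with $\Psi_{1}(w):=w_{T(w)}$ and $\Psi_{2}(w):=\{w_{t\wedge T(w)}\}_{t\geq 0}$. Theorem~\ref{tm3.1} furnishes $X^{\varepsilon}\xrightarrow{d}\process{W}$ in $D([0,\infty),\R^{d})$, so \eqref{eq3.7} and \eqref{eq3.8} will both follow at once if one produces a Borel set $D_{0}\subseteq D([0,\infty),\R^{d})$ of continuity points of $\Psi_{1}$ and $\Psi_{2}$ such that $\mathbb{P}(W\in D_{0})=1$.

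First, I would invoke the standard continuity criterion for first-passage maps (Proposition~VI.2.11 in \cite{jacod}, or the analogous open-set passage-time result in Billingsley's weak-convergence framework): $\Psi_{1}$ and $\Psi_{2}$ are continuous at every c\`adl\`ag path $w$ which is continuous at $T(w)$ and for which $T(w)=T'(w)$. Since $\process{W}$ has continuous sample paths almost surely, the first requirement is automatic, so everything reduces to verifying the non-grazing identity $T(W)=T'(W)$ a.s.

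Second, I would use the geometric hypotheses on $O$ combined with classical potential theory for the Brownian motion $W$. Under (C3)--(C5) the covariance $\Sigma$ in \eqref{eq3.2} is positive definite (the summands $c(z)$ and $\int y_{i}y_{j}\nu(z,dy)$ are both non-negative definite, and strict positive definiteness of their sum is what drives the nondegeneracy of $p(t,x,y)$ in (C3)), so after a linear change of coordinates $W$ becomes a standard Brownian motion. The Poincar\'{e} cone condition then reduces to the classical Zaremba criterion: every $x\in\partial O$ is regular for $\mathrm{Int}\,O^{c}$ from the side of $O$ (and, symmetrically, for $O$ from the side of $\mathrm{Int}\,O^{c}$), because a fixed truncated cone sits in the relevant complement. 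Applying the strong Markov property at $T(W)$ then yields $T(W)=T'(W)$ a.s.\ on $\{T(W)<\infty\}$; for $d=1$ the boundary of any open set is a countable set of isolated points which are crossed immediately by one-dimensional Brownian motion, so the equality holds trivially without any cone hypothesis.

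The main obstacle I anticipate lies precisely in this second step: turning the Poincar\'{e} cone condition into the almost sure identity $T(W)=T'(W)$ for an anisotropic Brownian motion, and handling the initial condition carefully. Note that for any fixed initial distribution of $\process{F}$ we have $X^{\varepsilon}_{0}=\varepsilon F_{0}\to 0$ in probability, so the limit $W$ starts at the origin; if $0\in\partial O$ one must argue separately (again via regularity at $0$) that $T(W)=0=\lim_{\varepsilon\searrow0}T^{\varepsilon}$ a.s., while the cases $0\in O$ and $0\in\mathrm{Int}\,O^{c}$ are immediate. Once these matters are dispatched, a single application of the continuous mapping theorem to $\Psi_{1}$ and $\Psi_{2}$ produces \eqref{eq3.7} and \eqref{eq3.8} simultaneously.
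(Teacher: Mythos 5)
Your proposal follows the same route as the paper: reduce \eqref{eq3.7} and \eqref{eq3.8} to a continuous-mapping argument on $\mathbb{D}(\R^{d})$, and establish a.s.\ continuity of the exit-time and stopped-path maps at the sample paths of the limiting Brownian motion via boundary regularity from the Poincar\'{e} cone condition. The paper carries this out by invoking \cite[Lemma 8]{rus} for the set $\mathcal{C}$ of continuity points of $\tau_O$ (and a hands-on argument with \cite[Proposition VI.1.17]{jacod} for the stopped map $\mathcal{S}_O$), while you package the same reduction as a criterion ``continuity at $T(w)$ plus $T(w)=T'(w)$''; these are interchangeable.

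One step in your sketch is, however, stated too loosely and conceals where a hypothesis of the theorem is actually used. You write that ``every $x\in\partial O$ is regular for $\mathrm{Int}\,O^{c}$ \dots because a fixed truncated cone sits in the relevant complement.'' The cone condition at $x\in\partial O$ places a truncated cone in $O^{c}$, not in $\mathrm{Int}\,O^{c}$; Zaremba's criterion therefore gives regularity for $O^{c}$ (equivalently $\mathbb{P}_{x}(\tau_{O}(W)=0)=1$) and, using the cone condition for $\mathrm{Int}\,O^{c}$, regularity for $\bar O$ (equivalently $\mathbb{P}_{x}(\tau_{\mathrm{Int}\,O^{c}}(W)=0)=1$). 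Passing from these to $T(W)=T'(W)$ requires, in addition, that $W$ does not linger on $\partial O$, and that is exactly where the hypothesis ${\rm Leb}(\partial O)=0$ enters: the paper bounds the offending event by $\sum_{n}\mathbb{P}_{x}(W_{1/n}\in\partial O)=0$. Your proposal never invokes the measure-zero assumption on $\partial O$, so as written the upgrade to regularity for the \emph{open} sets $O$ and $\mathrm{Int}\,O^{c}$ is unjustified; once that hypothesis is brought in, your argument matches the paper's. On the positive side, you explicitly notice the nondegeneracy of $\Sigma$ needed to apply the standard Brownian-motion regularity theory, and the care needed at $W_{0}=0$ when $0\in\partial O$ --- both points the paper leaves implicit.
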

As a consequence of Theorem \ref{tm3.7} and \cite[Theorem 6.5.1]{fried} we get the following well-known result.
\begin{corollary}\label{c3.8}
Let  $\process{F}$ be a $d$-dimensional  diffusion with vanishing drift coefficient and uniformly elliptic diffusion coefficient $c(x)$,  satisfying conditions (\textbf{C3}) and  (\textbf{C4}).
Further, let
 $\process{W}$ be a   zero-drift Brownian motion
determined by the covariance matrix $\Sigma$ (given in \eqref{eq3.2}), let $O\subseteq\R^{d}$ be an open bounded set with $C^{2}$ boundary  satisfying the assumptions from Theorem \ref{tm3.7} and let $a:O\cup\partial O\longrightarrow(-\infty,0]$ be an arbitrary H\"{o}lder continuous function.
 For  $\varepsilon>0$, let $(\mathcal{A}^{\infty}_\varepsilon,\mathcal{D}_{\mathcal{A}^{\infty}_\varepsilon})$ and  $(\mathcal{A}^{\infty}_0,\mathcal{D}_{\mathcal{A}^{\infty}_0})$   be the Feller generators   of $\{\varepsilon F_{\varepsilon^{-2}t}\}_{t\geq0}$ and $\process{W}$, respectively, that is,
 $$\mathcal{A}^{\infty}_\varepsilon f(x)=\frac{1}{2}\sum_{i,j=1}^{d}c_{ij}(x/\varepsilon)\frac{\partial^{2}f}{\partial x_i\partial x_j}(x)\quad \textrm{and}\quad \mathcal{A}^{\infty}_0 f(x)=\frac{1}{2}\sum_{i,j=1}^{d}\int_{[0,\tau]}c_{ij}(z)\pi(dz)\frac{\partial^{2}f}{\partial x_i\partial x_j}(x).$$
 Then, for any $\varepsilon\geq0$, any  H\"{o}lder continuous function  $f:O\cup\partial O\longrightarrow\R$ and any continuous function $g:\partial O\longrightarrow\R$,
 there exists a unique solution $u_\varepsilon(x)$ to the following Dirichlet problem \begin{align*}\mathcal{A}^{\infty}_\varepsilon u_\varepsilon(x)+a(x)u_\varepsilon(x)&=f(x),\quad x\in O\\
 u_\varepsilon(x)&=g(x),\quad x\in\partial O.\end{align*} Moreover, $u_\varepsilon(x)$ converges (pointwise) to $u(x)$ as $\varepsilon\searrow0.$
\end{corollary}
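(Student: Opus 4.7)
The plan is to combine the Feynman--Kac probabilistic representation of $u_\varepsilon$, furnished by \cite[Theorem 6.5.1]{fried}, with the path-space convergence in Theorem \ref{tm3.7}, and then interchange limit and expectation via the continuous mapping theorem together with a uniform integrability bound.

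First, under the stated hypotheses (uniform ellipticity and H\"older continuity of $c(\cdot/\varepsilon)$, $C^{2}$-regularity of $\partial O$, H\"older continuity of $a$ and $f$, with $a\leq 0$), Friedman's theorem yields, for every $\varepsilon\geq 0$, a unique classical solution $u_\varepsilon\in C^{2}(O)\cap C(\bar{O})$ together with the representation
\begin{equation*}
u_\varepsilon(x)=\mathbb{E}_{x}\!\left[g(\varepsilon F_{\varepsilon^{-2}T^{\varepsilon}})\exp\!\left(\int_{0}^{T^{\varepsilon}}a(\varepsilon F_{\varepsilon^{-2}s})\,ds\right)\right]-\mathbb{E}_{x}\!\left[\int_{0}^{T^{\varepsilon}}f(\varepsilon F_{\varepsilon^{-2}t})\exp\!\left(\int_{0}^{t}a(\varepsilon F_{\varepsilon^{-2}s})\,ds\right)dt\right],
\end{equation*}
and the analogous formula for $u=u_0$ with $\process{W}$ in place of $\{\varepsilon F_{\varepsilon^{-2}\cdot}\}$ and $T^{0}$ in place of $T^{\varepsilon}$. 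Since the limit covariance $\Sigma$ is positive definite (by uniform ellipticity of $c$), $\mathcal{A}_{0}^{\infty}$ is itself an elliptic operator with constant coefficients, so Friedman's theorem applies at $\varepsilon=0$ as well.

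Next, introduce the functionals
\begin{equation*}
\Phi_{1}(\omega):=g(\omega(T_{O}(\omega)))\exp\!\Bigl(\int_{0}^{T_{O}(\omega)}a(\omega(s))\,ds\Bigr),\qquad \Phi_{2}(\omega):=\int_{0}^{T_{O}(\omega)}f(\omega(t))\exp\!\Bigl(\int_{0}^{t}a(\omega(s))\,ds\Bigr)dt
\end{equation*}
on the Skorokhod space $D([0,\infty),\R^{d})$, where $T_{O}(\omega):=\inf\{t>0:\omega(t)\notin O\}$. Both are continuous at every path $\omega$ at which $T_{O}$ is continuous. Using the Poincar\'e cone condition on $O$ and ${\rm Int}\,O^{c}$ together with $|\partial O|=0$ (a classical regularity argument), $T_{O}$ is $\mathbb{P}_{x}$-a.s.\ continuous at the Brownian path; combined with the functional convergence in Theorem \ref{tm3.7}, the continuous mapping theorem delivers $\Phi_{i}(\varepsilon F_{\varepsilon^{-2}\cdot})\xrightarrow[\varepsilon\searrow0]{d}\Phi_{i}(W)$ for $i=1,2$.

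Finally, since $a\leq 0$ both exponentials are bounded by $1$, so $|\Phi_{1}|\leq ||g||_{\infty}$ is trivially uniformly integrable, while $|\Phi_{2}|\leq ||f||_{\infty}\,T^{\varepsilon}$ is controlled by moments of $T^{\varepsilon}$. Uniform ellipticity of $c(\cdot/\varepsilon)$ with ellipticity constant independent of $\varepsilon$, together with the boundedness of $O$, yields via the Dynkin-formula test-function argument applied to $h(x)=K-|x|^{2}$ on $\bar{O}$ the bound $\sup_{\varepsilon>0}\sup_{x\in O}\mathbb{E}_{x}[(T^{\varepsilon})^{2}]<\infty$, which gives uniform integrability of $\{\Phi_{2}(\varepsilon F_{\varepsilon^{-2}\cdot})\}_{\varepsilon>0}$. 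Hence $\mathbb{E}_{x}[\Phi_{i}(\varepsilon F_{\varepsilon^{-2}\cdot})]\to\mathbb{E}_{x}[\Phi_{i}(W)]$ for $i=1,2$, which is precisely $u_\varepsilon(x)\to u(x)$. The main obstacle is the almost-sure continuity of $T_{O}$ at the Brownian path, which is exactly where the geometric assumptions on $\partial O$ come into play.
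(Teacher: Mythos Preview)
Your proposal is correct and follows exactly the route the paper indicates: the paper states the corollary ``as a consequence of Theorem \ref{tm3.7} and \cite[Theorem 6.5.1]{fried}'' without writing out details, and you have supplied precisely those details --- the Feynman--Kac representation from Friedman, the a.s.\ continuity of the exit-time functional at Brownian paths (which is the content of the proof of Theorem \ref{tm3.7}, in particular the joint convergence \eqref{eq5.10}), and the uniform-integrability step via $a\leq 0$ and the $\varepsilon$-uniform exit-time moment bound from uniform ellipticity.
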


\section{Auxiliary Results}\label{s4}

We start this section with the following observation. Let  $\process{F}$ be a $d$-dimensional diffusion with jumps with  transition density function $p(t,x,y)$ and L\'evy triplet $(b(x),c(x),\nu(x,dy))$. Then, the coefficients $b(x)$, $c(x)$ and $\nu(x,dy)$ are $\tau$-periodic if, and only if,
 the function $x\longmapsto p(t,x,x+y)$ is $\tau$-periodic for all $t>0$ and all $y\in\R^{d}$.    The sufficiency
follows directly from  \cite[the proof of Theorem 4.5.21]{jacobI}. To prove the
necessity, first recall that there exists a suitable enlargement of the stochastic basis    $(\Omega,\mathcal{F},\{\mathbb{P}_{x}\}_{x\in\R^{d}},\process{\mathcal{F}},\process{\theta})$, say $(\widetilde{\Omega},\mathcal{\widetilde{F}},\{\mathbb{\widetilde{P}}_{x}\}_{x\in\R^{d}},\process{\mathcal{\widetilde{F}}},\process{\widetilde{\theta}})$, on which $\process{F}$ is
the solution to the following stochastic differential equation
\begin{align}\label{eq4.1}F_t=&x+\int_0^{t}b(F_{s-})ds+\int_0^{t}c(F_{s-})d\widetilde{W}_s\nonumber\\&+\int_0^{t}\int_{\R\setminus\{0\}}k(F_{s-},z)1_{\{u:|k(F_{s-},u)|\leq1\}}(z)\left(\widetilde{\mu}(\cdot,ds,dz)-ds\widetilde{N}(dz)\right)\nonumber\\&+\int_0^{t}\int_{\R\setminus\{0\}}k(F_{s-},z)1_{\{u:|k(F_{s-},u)|>1\}}(z)\,\widetilde{\mu}(\cdot,ds,dz),
\end{align}
where $\process{\widetilde{W}}$ is a $d$-dimensional Brownian motion, $\widetilde{\mu}(\omega,ds,dz)$
is a Poisson random measure with compensator (dual
predictable projection) $ds\widetilde{N}(dz)$ and $k:\R^{d}\times\R\setminus\{0\}\longrightarrow\R^{d}$ is a Borel measurable function  satisfying
$$\widetilde{\mu}(\omega,ds,k(F_{s-}(\omega),\cdot)\in dy)=\sum_{s: \Delta F_s(\omega)\neq0}\delta_{(s,\Delta F_s(\omega))}(ds,dy)$$
and
$$ds\widetilde{N}(k(F_{s-}(\omega),\cdot)\in dy)=ds\,\nu(F_{s-}(\omega),dy)$$ (see \cite[Theorem 3.5]{rene-holder} and \cite[Theorem 3.33]{cinlar}).
Further,  $\process{F}$ has the same transition function on the starting and enlarged stochastic basis. Thus, because of the $\tau$-periodicity of $b(x)$, $c(x),$ and $\nu(x,dy))$, directly from (\ref{eq4.1}) we read that $\mathbb{P}_{x+\tau}(F_t\in dy)=\mathbb{P}_{x}(F_t+\tau\in dy)$ for all $t\geq0$ and all $x\in\R^{d}$, which proves the assertion.

Now, according to the above observation, we easily deduce that if $\process{F}$ is a $d$-dimensional diffusion with jumps with semigroup $\process{P}$, transition density function $p(t,x,y)$ and $\tau$-periodic L\'evy triplet $(b(x),c(x),\nu(x,dy))$, then $\process{P}$ preserves the class of all bounded Borel measurable $\tau$-periodic functions, that is, the function $x\longmapsto P_tf(x)$ is $\tau$-periodic for all $t\geq0$ and all $\tau$-periodic $f\in B_b(\R^{d})$. Consequently, \cite[Proposition 3.8.3]{vasili-book} entails that  $F_t^{\tau}:=\Pi_{\tau}(F_t),$
$t\geq0$, is a
 Markov process on $([0,\tau],\mathcal{B}([0,\tau])$ with positivity preserving contraction semigroup $\process{P^{\tau}}$
(on the space $(B_b([0,\tau]),||\cdot||_\infty)$) given by
$$P_t^{\tau}f(x):=\mathbb{E}^{\tau}_x[f(F_t^{\tau})]=\int_{[0,\tau]}f(y)\mathbb{P}_x^{\tau}(F^{\tau}_t\in dy),\quad t\geq0,\ x\in[0,\tau],\ f\in B_b([0,\tau]),$$ where
\begin{equation}\label{eq4.2}\mathbb{P}_x^{\tau}(F^{\tau}_t\in dy):=\sum_{k\in\tau\ZZ^{d}}\mathbb{P}_{z_x}(F_t-k\in dz_y),\quad t>0,\ x,y\in[0,\tau],\end{equation}  and    $z_x$ and $z_y$ are arbitrary points  in
$\Pi^{-1}_{\tau}(\{x\})$ and $\Pi^{-1}_{\tau}(\{y\})$,
respectively.  From \eqref{eq4.2} we automatically conclude that $\process{F^{\tau}}$ has a transition density function $p^{\tau}(t,x,y)$ which is given by
$$p^{\tau}(t,x,y)=\sum_{k\in\tau\ZZ^{d}}p(t,x,y+k),\quad  t>0,\ x,y\in[0,\tau].$$  In particular, if, in addition to the $\tau$-periodicity of the coefficients, $\process{F}$  satisfies  (\textbf{C3}), then
\begin{equation}\label{eq4.3}\inf_{x,y\in[0,\tau]}p^{\tau}(t,x,y)>0,\quad t>0,\end{equation}
which suggests that it is reasonable to expect that  $\process{F^{\tau}}$ is ergodic.

Recall, a probability measure $\pi(dx)$ on a measurable space $(S,\mathcal{S})$ is \emph{invariant} for a
Markov process
$(\Omega,\mathcal{F},\{\mathbb{P}_{x}\}_{x\in
S},\process{\mathcal{F}},\process{\theta},\process{M})$, denoted by $\process{M}$ in the sequel,
 if
$$\int_S\mathbb{P}_{x}(M_t\in B)\pi(dx)=\pi(B),\quad t\geq0,\  B\in\mathcal{S}.$$
  A set $B\in\mathcal{F}$ is said to be
\emph{shift-invariant} if $\theta_t^{-1}B=B$ for all $t\geq0$. The
\emph{shift-invariant} $\sigma$-algebra $\mathcal{I}$ is a
collection of all such shift-invariant sets. The process
$\process{M}$ is said to be \emph{ergodic} if it possesses an invariant probability measure $\pi(dx)$ and if  $\mathcal{I}$ is
trivial with respect to $\mathbb{P}_{\pi}(d\omega)$, that is,
$\mathbb{P}_{\pi}(B)=0$ or $1$ for every $B\in\mathcal{I}$.
Here,
for a probability measure $\mu(dx)$ on $\mathcal{S}$,
$\mathbb{P}_{\mu}(d\omega)$ is defined as
$\mathbb{P}_{\mu}(d\omega):=\int_S\mathbb{P}_{x}(d\omega)\mu(dx).$
The process
$\process{M}$ is said to be \emph{strongly ergodic} if it possesses an invariant probability measure $\pi(dx)$ and if
\begin{equation}\label{eq4.4}\lim_{t\longrightarrow\infty}||\mathbb{P}_{x}(M_t\in\cdot)-\pi(\cdot)||_{TV}=0,\quad  x\in S,\end{equation} where $||\cdot||_{TV}$ denotes the
total variation norm on the space of  signed measures on $\mathcal{S}$. Clearly,  an invariant probability measure of a strongly ergodic Markov processes is unique. In general, the strong ergodicity implies ergodicity (see \cite[Proposition 2.5]{bhat}). On the other hand, ergodicity does not necessarily imply strong ergodicity (for example, see \cite{turbul}).
Sufficient condition for the strong ergodicity of Markov processes is given through the well-known Doeblin's condition.
More precisely, a Markov process $\process{M}$ satisfies \emph{Doeblin's condition} if there exist a probability measure $\mu(dx)$ on $\mathcal{S}$, $t_0>0$ and $\varepsilon\in(0,1)$, such that $\mathbb{P}_x(M_{t_0}\in B)\leq1-\varepsilon$ whenever $\mu(B)\leq\varepsilon$, for all $x\in S$. Moreover, in this case, the convergence in \eqref{eq4.4} is uniformly exponentially fast (see \cite[Theorem VI.2.1]{doob}).
Now, by fixing $t_0>0$ and $$\varepsilon\in\left(0,1\wedge\frac{|\tau|\inf_{x,y\in[0,\tau]}p^{\tau}(t_0,x,y)}{1+\inf_{x,y\in[0,\tau]}p^{\tau}(t_0,x,y)}\right)$$ and taking $\mu(dx):=dx$, where $|\tau|:=\tau_1\cdots\tau_d$,  \eqref{eq4.3} immediately entails that $\process{F^{\tau}}$ satisfies Doeblin's condition. Hence,
$\process{F^{\tau}}$ possesses  a
unique invariant probability measure $\pi(dx)$ satisfying
\begin{equation}\label{eq4.5}\sup\left\{\left|P^{\tau}_t1_{B}(x)-\pi(B)\right|:x\in[0,\tau],\
B\in\mathcal{B}([0,\tau])\right\}\leq Ce^{-ct}\end{equation} for
all $t\geq0$ and some universal constants $c>0$ and $C>0$.
Let us remark that the exponential decay in \eqref{eq4.5} will be crucial in the proof of Theorem \ref{tm3.1}.

\section{Proofs of the Main Results}\label{s5}
Before the proof of Theorem \ref{tm3.1}, let us  recall the notion of characteristics of a semimartingale
(see \cite{jacod}). Let
$(\Omega,\mathcal{F},\process{\mathcal{F}},\mathbb{P},\process{S})$, denoted by
$\process{S}$ in the sequel, be a $d$-dimensional semimatingale and
let $h:\R^{d}\longrightarrow\R^{d}$ be a truncation function (that is, a
continuous and bounded function which satisfies $h(x)=x$ in a neighborhood of
the origin).
 Define,
$$\check{S}(h)_t:=\sum_{s\leq t}(\Delta S_s-h(\Delta S_s)),\ t\geq0,\quad
\textrm{and} \quad S(h)_t:=S_t-\check{S}(h)_t,\ t\geq0,$$ where the process
$\process{\Delta S}$ is defined by $\Delta S_t:=S_t-S_{t-}$ and
$\Delta S_0:=S_0$. The process $\process{S(h)}$ is a \emph{special
semimartingale}, that is, it admits a unique decomposition
\begin{equation}\label{eq:5.1}S(h)_t=S_0+M(h)_t+B(h)_t,\end{equation} where $\process{M(h)}$ is a local
martingale and $\process{B(h)}$ is a predictable process of bounded
variation.
\begin{definition}
 Let $\process{S}$  be a
semimartingale and let $h:\R^{d}\longrightarrow\R^{d}$ be a truncation
function. Furthermore, let $\process{B(h)}$  be the predictable
process
 of bounded variation appearing in (\ref{eq:5.1}),  let $N(\omega,ds,dy)$ be the
compensator of the jump measure
$$\mu(\omega,ds,dy):=\sum_{s:\Delta S_s(\omega)\neq 0}\delta_{(s,\Delta S_s(\omega))}(ds,dy)$$ of the process
$\process{S}$ and let $\process{C}=\{(C_t^{ij})_{1\leq i,j\leq d})\}_{t\geq0}$ be the quadratic co-variation
process for $\process{S^{c}}$ (continuous martingale part of
$\process{S}$), that is,
$C^{ij}_t=\langle S^{i,c}_t,S^{j,c}_t\rangle.$  Then $(B,C,N)$ is called
the \emph{characteristics} of the semimartingale $\process{S}$
 (relative to $h(x)$). In addition, by defining $\tilde{C}(h)^{ij}_t:=\langle
 M(h)^{i}_t,M(h)^{j}_t\rangle$, $i,j=1,\ldots,d$, where $\process{M(h)}$ is the local martingale
 appearing in (\ref{eq:5.1}),  $(B,\tilde{C},N)$ is called the \emph{modified
 characteristics} of the semimartingale $\process{S}$ (relative to $h(x)$).
\end{definition}

Now, we prove Theorem \ref{tm3.1}. We follow the idea from \cite[Theorem 1]{franke-periodic} (see also \cite[Theorem 1.2]{sandric-periodic}).

\begin{proof}[Proof of Theorem \ref{tm3.1}]
First,  recall that, for any initial distribution $\mu(dx)$ of $\process{F}$,
 $\{\varepsilon F_{\varepsilon^{-2}t}\}_{t\geq0}$, $\varepsilon>0$, are
 $\mathbb{P}_{\mu}$- semimartingales (with respect to the natural filtration generated by $\process{F}$) whose (modified) characteristics (relative to $h(x)$) are given by
  \begin{align*}B(h)^{\varepsilon,i}_t&=\frac{1}{\varepsilon^{2}}\int_0^{t}\int_{\R^{d}}\left(h_i(\varepsilon y)-\varepsilon y_i1_{B(0,1)}(y)\right)\nu(F_{\varepsilon^{-2}s},dy)ds,\\
C^{\varepsilon,ij}_t&=\int_0^{t}c_{ij}(F_{\varepsilon^{-2}s})ds,\\
N^{\varepsilon}(ds,B)&=\frac{1}{\varepsilon^{2}}\int_{\R^{d}}1_{B}\left(\varepsilon y\right)\nu(F_{\varepsilon^{-2}s},dy)ds,\\
\tilde{C}(h)^{\varepsilon,ij}_t&=\int_0^{t}c_{ij}(F_{\varepsilon^{-2}s})ds+\frac{1}{\varepsilon^{2}}\int_0^{t}\int_{\R^{d}}h_{i}\left(\varepsilon y\right)h_{j}\left(\varepsilon y\right)\nu(F_{\varepsilon^{-2}s},dy)ds,
\end{align*} $t\geq0$, $B\in\mathcal{B}(\R^{d})$, $i,j=1,\ldots,d$, (see \cite[Lemma 3.2 and Theorem 3.5]{rene-holder} and \cite[Proposition II.2.17]{jacod}).
Now, according to
\cite[Theorem VIII.2.17]{jacod}, in order to prove the desired convergence it suffices to prove that
\begin{equation}\label{eq5.2}B(h)^{\varepsilon,i}_t\xrightarrow[\varepsilon\searrow0]{L^{2}(\mathbb{P}_{\mu},\Omega)} 0
\end{equation} for all  $t\geq0$ and all $i=1,\ldots,d,$
\begin{equation}\label{eq5.3}\int_0^{t}\int_{\R^{d}}g(y)N^{\varepsilon}(ds,dy)\xrightarrow[\varepsilon\searrow0]{L^{2}(\mathbb{P}_{\mu},\Omega)}0
\end{equation} for all $t\geq0$ and all $g\in C_b(\R^{d})$
vanishing in a neighborhood of the origin,
and\begin{equation}\label{eq5.4}\tilde{C}(h)^{\varepsilon,ij}_t\xrightarrow[\varepsilon\searrow0]{L^{2}(\mathbb{P}_{\mu},\Omega)}t\Sigma^{ij}\end{equation}
for all  $t\geq0$ and all $i,j=1,\ldots,d$, where $\Sigma$ is given in \eqref{eq3.2}.

First, we prove the relation in (\ref{eq5.2}). For all $\varepsilon>0$ small enough, we have $$B(h)^{\varepsilon,i}_t=\frac{1}{\varepsilon}\int_0^{t}\int_{B(0,\varepsilon^{-1}\delta)\setminus B(0,1)}y_i\nu(F_{\varepsilon^{-2}s},dy)ds+\frac{1}{\varepsilon^{2}}\int_0^{t}\int_{B^{c}(0,\varepsilon^{-1}\delta)}h_i(\varepsilon y)\nu(F_{\varepsilon^{-2}s},dy)ds,$$ where $\delta>0$ is such that $h(y)=y$ on $B(0,\delta).$ Now, define
\begin{align*}
U_1^{\varepsilon,i}(x)&:=\frac{1}{\varepsilon}\int_{B(0,\varepsilon^{-1}\delta)\setminus B(0,1)}y_i\left(\nu(x,dy)-\int_{[0,\tau]}\nu(z,dy)\pi(dz)\right),\\
U_2^{\varepsilon,i}(x)&:=\frac{1}{\varepsilon^{2}}\int_{B^{c}(0,\varepsilon^{-1}\delta)}h_i(\varepsilon y)\left(\nu(x,dy)-\int_{[0,\tau]}\nu(z,dy)\pi(dz)\right).\end{align*}
Clearly, for all $\varepsilon>0$ small enough, $U_k^{\varepsilon,i}(x)$, $k=1,2,$ are bounded, $\tau$-periodic and  satisfy $U_k^{\varepsilon,i}(F_t)= U_k^{\varepsilon,i}(F^{\tau}_t)$, $t\geq0$, and $$\int_{[0,\tau]}U_k^{\varepsilon,i}(x)\pi(dx)=0.$$
Thus, by the Markov property, \eqref{eq4.2} and (\ref{eq4.5}),
\begin{align*}\mathbb{E}_{\mu}\left[\left(\int_0^{t}U_1^{\varepsilon,i}(F_{\varepsilon^{-2}s})ds\right)^{2}\right]\nonumber&=\mathbb{E}^{\tau}_{\mu}\left[\left(\int_0^{t}U_1^{\varepsilon,i}(F^{\tau}_{\varepsilon^{-2}s})ds\right)^{2}\right] \nonumber\\&=2\int_0^{t}\int_0^{s}\mathbb{E}^{\tau}_{\mu}\left[U_1^{\varepsilon,i}(F^{\tau}_{\varepsilon^{-2}s})U_1^{\varepsilon,i}(F^{\tau}_{\varepsilon^{-2}u})\right]duds
\nonumber\\&=2\int_0^{t}\int_0^{s}\mathbb{E}^{\tau}_{\mu}\left[P^{\tau}_{\varepsilon^{-2}(s-u)}U_1^{\varepsilon,i}(F^{\tau}_{\varepsilon^{-2}u})U_1^{\varepsilon,i}(F^{\tau}_{\varepsilon^{-2}u})\right]\nonumber\\
&\leq2C||U_1^{\varepsilon,i}||^{2}_{\infty}\int_0^{t}\int_0^{s}e^{-c\varepsilon^{-2}(s-u)}duds\nonumber\\
&\leq\frac{4C\varepsilon^{2}t}{c}||U_1^{\varepsilon,i}||^{2}_{\infty}\nonumber\\&=\frac{4Ct}{c}\sup_{x\in[0,\tau]}\left(\int_{B(0,\varepsilon^{-1}\delta)\setminus B(0,1)}y_i\left(\nu(x,dy)-\int_{[0,\tau]}\nu(z,dy)\pi(dz)\right)\right)^{2}\\&\leq \frac{16Ct}{c}   \sup_{x\in[0,\tau]}\left|\int_{B(0,\varepsilon^{-1}\delta)\setminus B(0,1)}y_i\,\nu(x,dy)\right|^{2}.
\end{align*}
Similarly,
\begin{align*}&\mathbb{E}_{\mu}\left[\left(\int_0^{t}U_2^{\varepsilon,i}(F_{\varepsilon^{-2}s})ds\right)^{2}\right]
\nonumber\\&=2\int_0^{t}\int_0^{s}\mathbb{E}^{\tau}_{\mu}\left[P^{\tau}_{\varepsilon^{-2}(s-u)}U_2^{\varepsilon,i}(F^{\tau}_{\varepsilon^{-2}u})U_2^{\varepsilon,i}(F^{\tau}_{\varepsilon^{-2}u})\right]\nonumber\\
&\leq2C||U_2^{\varepsilon,i}||^{2}_{\infty}\int_0^{t}\int_0^{s}e^{-c\varepsilon^{-2}(s-u)}duds
\nonumber\\&
\leq\frac{4C||h||^{2}_{\infty}t}{c\varepsilon^{2}}\sup_{x\in[0,\tau]}\left(\nu\left(x,B^{c}(0,\varepsilon^{-1}\delta)\right)-\int_{[0,\tau]}\nu\left(z,B^{c}(0,\varepsilon^{-1}\delta)\pi(dz)\right)\right)^{2}\nonumber\\
&\leq\frac{16C||h||^{2}_{\infty}\varepsilon^{2}t}{c\delta^{4}}\sup_{x\in[0,\tau]}\left(\int_{B^{c}(0,\varepsilon^{-1}\delta)}|y|^{2}\nu(x,dy)\right)^{2},
\end{align*} where in the final step we used (\textbf{C5}).
Now, for all $\varepsilon>0$ small enough,  \begin{align*}&\left(\mathbb{E}_{\mu}\left[\left(B(h)^{\varepsilon,i}_t\right)^{2}\right]\right)^{\frac{1}{2}}\\&\leq\left(\mathbb{E}_{\mu}\left[\left(\int_0^{t}U_1^{\varepsilon,i}(F_{\varepsilon^{-2}s})ds\right)^{2}\right]\right)^{\frac{1}{2}}+
\left(\mathbb{E}_{\mu}\left[\left(\frac{t}{\varepsilon}\int_{[0,\tau]}\int_{B(0,\varepsilon^{-1}\delta)\setminus B(0,1)}y_i\nu(z,dy)\pi(dz)\right)^{2}\right]\right)^{\frac{1}{2}}\\ &\ \ \ +\left(\mathbb{E}_{\mu}\left[\left(\int_0^{t}U_2^{\varepsilon,i}(F_{\varepsilon^{-2}s})ds\right)^{2}\right]\right)^{\frac{1}{2}}+\left(\mathbb{E}_{\mu}\left[\left(\frac{t}{\varepsilon^{2}}\int_{[0,\tau]}\int_{B^{c}(0,\varepsilon^{-1}\delta)}h_i(\varepsilon y)\nu(z,dy)\pi(dz)\right)^{2}\right]\right)^{\frac{1}{2}}\end{align*}
\begin{align*}&\leq\frac{4C^{1/2}t^{1/2}}{c^{1/2}}\sup_{x\in[0,\tau]}\left|\int_{B(0,\varepsilon^{-1}\delta)\setminus B(0,1)}y_i\,\nu(x,dy)\right|+\frac{t}{\varepsilon}\int_{[0,\tau]}\int_{B^{c}(0,\varepsilon^{-1}\delta)}y_i\nu(z,dy)\pi(dz)\\
&\ \ \ +\frac{4C^{1/2}||h||_{\infty}\varepsilon t^{1/2}}{c^{1/2}\delta^{2}}\sup_{x\in[0,\tau]}\int_{B^{c}(0,\varepsilon^{-1}\delta)}|y|^{2}\nu(x,dy)+\frac{||h||_{\infty}t}{\delta^{2}}\sup_{x\in[0,\tau]}\int_{B^{c}(0,\varepsilon^{-1}\delta)}|y|^{2}\nu(x,y)\\
 &\leq \frac{4C^{1/2}t^{1/2}}{c^{1/2}}\sup_{x\in[0,\tau]}\left|\int_{B(0,\varepsilon^{-1}\delta)\setminus B(0,1)}y_i\,\nu(x,dy)\right|+
\frac{t}{\delta}\sup_{x\in[0,\tau]}\int_{B^{c}(0,\varepsilon^{-1}\delta)}|y|^{2}\nu(x,dy)\\&\ \ \ +\frac{4C^{1/2}||h||_{\infty}\varepsilon t^{1/2}}{c^{1/2}\delta^{2}}\sup_{x\in[0,\tau]}\int_{B^{c}(0,\varepsilon^{-1}\delta)}|y|^{2}\nu(x,dy) +\frac{||h||_{\infty}t}{\delta^{2}}\sup_{x\in\R^{d}}\int_{B^{c}(0,\varepsilon^{-1}\delta)}|y|^{2}\nu(x,y),
\end{align*}
where in the second step, due to (\textbf{C6}), we used the fact that $$\int_{B^{c}(0,1)}y_i\,\nu(x,dy)=0,\quad x\in[0,\tau].$$
Thus, (\ref{eq5.2}) follows by employing (\textbf{C5}) and (\textbf{C6}).

Next, let us prove the relation in (\ref{eq5.3}).
Fix  $g\in C_b(\R^{d})$ which vanishes on $B(0,\delta)$, for some $\delta>0$.
Further, define $$V^{\varepsilon}(x):=\frac{1}{\varepsilon^{2}}\int_{\R^{d}}g(\varepsilon y)\left(\nu(x,dy)-\int_{[0,\tau]}\nu(z,dy)\pi(dz)\right).$$
Clearly, for any $\varepsilon>0$, $V^{\varepsilon}(x)$ has the same properties as $U_1^{\varepsilon,i}(x)$ and $U_2^{\varepsilon,i}(x)$. Thus, similarly as above, we get
\begin{align*}\mathbb{E}_{\mu}\left[\left(\int_0^{t}V^{\varepsilon}(F_{\varepsilon^{-2}s})ds\right)^{2}\right]&\leq2C||V^{\varepsilon,i}||^{2}_{\infty}\int_0^{t}\int_0^{s}e^{-c\varepsilon^{-2}(s-u)}duds\\&\leq\frac{4C||g||^{2}_{\infty}t}{c\varepsilon^{2}}\sup_{x\in[0,\tau]}\left(\nu\left(x,B^{c}(0,\varepsilon^{-1}\delta)\right)-\int_{[0,\tau]}\nu\left(z,B^{c}(0,\varepsilon^{-1}\delta)\right)\pi(dz)\right)^{2}\\
&\leq\frac{16C||g||^{2}_{\infty}\varepsilon^{2}t}{c\delta^{4}}\sup_{x\in[0,\tau]}\left(\int_{B^{c}(0,\varepsilon^{-1}\delta)}|y|^{2}\nu(x,dy)\right)^{2}.
\end{align*}
Consequently, \begin{align*}&\left(\mathbb{E}_{\mu}\left[\left(\int_0^{t}\int_{\R^{d}}g(y)N^{\varepsilon}(ds,dy)\right)^{2}\right]\right)^{\frac{1}{2}}\\&\leq\left(\mathbb{E}_{\mu}\left[\left(\int_0^{t}V^{\varepsilon}(\bar{F}_{\varepsilon^{-2}s})ds\right)^{2}\right]\right)^{\frac{1}{2}}
+\left(\mathbb{E}_{\mu}\left[\left(\frac{t}{\varepsilon^{2}}\int_{[0,\tau]}\int_{\R^{d}}g(\varepsilon y)\nu(z,dy)\pi(dz)\right)^{2}\right]\right)^{\frac{1}{2}}\\&\leq
\frac{4C^{1/2}||g||_{\infty}\varepsilon t^{1/2}}{c^{1/2}\delta^{2}}\sup_{x\in[0,\tau]}\int_{B^{c}(0,\varepsilon^{-1}\delta)}|y|^{2}\nu(x,dy)
+\frac{||g||_{\infty}t}{\delta^{2}}\int_{B^{c}(0,\varepsilon^{-1}\delta)}|y|^{2}\nu(x,dy),
\end{align*}
 which together with (\textbf{C5}) proves (\ref{eq5.3}).

Finally, let us prove the relation in (\ref{eq5.4}).  We have \begin{align*}\tilde{C}(h)^{\varepsilon,ij}_t=&\int_0^{t}c_{ij}(F_{\varepsilon^{-2}s})ds+\int_0^{t}\int_{B(0,\varepsilon^{-1}\delta)}y_iy_j\nu(F_{\varepsilon^{2}s},dy)ds\\&+
\frac{1}{\varepsilon^{2}}\int_0^{t}\int_{B^{c}(0,\varepsilon^{-1}\delta)}h_i(\varepsilon y)h_j(\varepsilon y)\nu(F_{\varepsilon^{2}s},dy)ds,\end{align*} where $\delta>0$ is such that $h(y)=y$ on $B(0,\delta)$. Now, define
\begin{align*}W_1^{\varepsilon,ij}(x)&:=c_{ij}(x)-\int_{[0,\tau]}c^{\tau}_{ij}(z)\pi(dz),\\
W_2^{\varepsilon,ij}(x)&:=\int_{B(0,\varepsilon^{-1}\delta)}y_iy_j\left(\nu(x,dy)-\int_{[0,\tau]}\nu(z,dy)\pi(dz)\right),\\
W_3^{\varepsilon,ij}(x)&:= \frac{1}{\varepsilon^{2}}\int_{B^{c}(0,\varepsilon^{-1}\delta)}h^{i}(\varepsilon y)h(\varepsilon y)\left(\nu(x,dy)-\int_{[0,\tau]}\nu(z,dy)\pi(dz)\right).\end{align*}
Again, for any $\varepsilon>0$, $W_k^{\varepsilon,ij}(x)$, $k=1,2,3$, are bounded, $\tau$-periodic and  satisfy $W_k^{\varepsilon,ij}(F_t)= W_k^{\varepsilon,ij}(F^{\tau}_t)$, $t\geq0$, and $$\int_{[0,\tau]}W_k^{\varepsilon,ij}(x)\pi(dx)=0.$$
Therefore, by using the same arguments as above, from (\ref{eq4.2}) and (\ref{eq4.5}) we get
\begin{equation}\label{eq5.5}\left(\mathbb{E}_{\mu}\left[\left(\int_0^{t}W_1^{\varepsilon,ij}(F_{\varepsilon^{-2}s})ds\right)^{2}\right]\right)^{\frac{1}{2}}\leq\frac{4 C^{1/2}\varepsilon t^{1/2}}{c^{1/2}}||c_{ij}||_{\infty},
\end{equation}
\begin{align}\label{eq5.6}&\left(\mathbb{E}_{\mu}\left[\left(\int_0^{t}\int_{B(0,\varepsilon^{-1}\delta)}y_iy_j\nu(F_{\varepsilon^{-2}s},dy)ds-t\int_{[0,\tau]}\int_{\R^{d}}y_iy_j\nu(z,dy)\pi(dz)\right)^{2}\right]\right)^{\frac{1}{2}}\nonumber\\
&\leq\left(\mathbb{E}_{\mu}\left[\left(\int_0^{t}W_2^{\varepsilon,ij}(F_{\varepsilon^{-2}s})ds\right)^{2}\right]\right)^{\frac{1}{2}}+\left(\mathbb{E}_{\mu}\left[\left(t\int_{[0,\tau]}\int_{B^{c}(0,\varepsilon^{-1}\delta)}y_iy_j\nu(z,dy)\pi(dz)\right)^{2}\right]\right)^{\frac{1}{2}}\nonumber\\&\leq\frac{2C^{1/2}\varepsilon t^{1/2}}{c^{1/2}}\sup_{x\in[0,\tau]}\left|\int_{B(0,\varepsilon^{-1}\delta)}y_iy_j\left(\nu(x,dy)-\int_{[0,\tau]}\nu(z,dy)\pi(dz)\right)\right| \nonumber\\& \ \ \ +t\left|\int_{[0,\tau]}\int_{B^{c}(0,\varepsilon^{-1}\delta)}y_iy_j\nu(z,dy)\pi(dz)\right|\nonumber\\&\leq
\frac{4C^{1/2}\varepsilon t^{1/2}}{c^{1/2}}\sup_{x\in[0,\tau]}\int_{B(0,\varepsilon^{-1}\delta)}|y|^{2}\nu(x,dy)+ t\sup_{x\in[0,\tau]}\int_{B(0,\varepsilon^{-1}\delta)}|y|^{2}\nu(x,dy)\end{align}
and
\begin{align}\label{eq5.7}&\left(\mathbb{E}_{\mu}\left[\left(\frac{1}{\varepsilon^{2}}\int_0^{t}\int_{B^{c}(0,\varepsilon^{-1}\delta)}h_i(\varepsilon y)h_j(\varepsilon y)\nu(F_{\varepsilon^{2}s},dy)ds\right)^{2}\right]\right)^{\frac{1}{2}}\nonumber\\
&\leq\left(\mathbb{E}_{\mu}\left[\left(\int_0^{t}W_3^{\varepsilon,ij}(F_{\varepsilon^{-2}s})ds\right)^{2}\right]\right)^{\frac{1}{2}}\nonumber\\
&\ \ \ +\left(\mathbb{E}_{\mu}\left[\left(\frac{t}{\varepsilon^{2}}\int_{[0,\tau]}\int_{B^{c}(0,\varepsilon^{-1}\delta)}h_i(\varepsilon y)h_j(\varepsilon y)\nu(z,dy)\pi(dz)\right)^{2}\right]\right)^{\frac{1}{2}}\nonumber\\&\leq\frac{2C^{1/2}||g||_{\infty}t^{1/2}}{c^{1/2}\varepsilon}\sup_{x\in[0,\tau]}\left|\nu\left(x,B^{c}(0,\varepsilon^{-1}\delta)\right)-\int_{[0,\tau]}\nu\left(z,B^{c}(0,\varepsilon^{-1}\delta)\pi(dz)\right)\right|\nonumber\\
&\ \ \ + \frac{||h||_{\infty}t}{\varepsilon^{2}}\int_{[0,\tau]}\nu(z,B^{c}(0,\varepsilon^{-1}\delta))\pi(dz)\nonumber\\
&\leq\frac{4C^{1/2}||g||_{\infty}\varepsilon t^{1/2}}{c^{1/2}\delta^{2}}\sup_{x\in[0,\tau]}\int_{B^{c}(0,\varepsilon^{-1}\delta)}|y|^{2}\nu(x,dy)+\frac{||h||_{\infty}t}{\delta^{2}}\int_{B^{c}(0,\varepsilon^{-1}\delta)}|y|^{2}\nu(x,dy).\end{align}
Finally, by combining (\ref{eq5.5}), (\ref{eq5.6}), (\ref{eq5.7}), (\textbf{C5}) and (\textbf{C6}), we get (\ref{eq5.4}).
\end{proof}

In the L\'evy process case we can generalize Theorem \ref{tm3.1}.

\begin{corollary}\label{c5.2}Let  $\process{F}$ be a $d$-dimensional L\'evy process with  L\'evy triplet $(b,c,\nu(dy))$ satisfying condition
(\textbf{C5}).
Then, for any initial
distribution  of $\process{F}$,
$$\left\{\varepsilon\left(F_{\varepsilon^{-2}t}- \varepsilon^{-2}t\left(b+\int_{B^{c}(0,1)}y\,\nu(dy)\right)\right)\right\}_{t\geq0}\xrightarrow[\varepsilon\searrow0]{d}\process{W},$$
 where
$\process{W}$ is a $d$-dimensional zero-drift Brownian motion starting from the origin and
determined by a covariance matrix  of the form
\begin{equation} \label{eq5.8} \Sigma:=\Bigg(c_{ij}+\int_{\R^{d}}y_iy_j\nu(dy)\Bigg)_{1\leq
i,j\leq d}.
\end{equation}
\end{corollary}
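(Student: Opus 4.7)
The plan is to follow the semimartingale characteristics approach used in the proof of Theorem \ref{tm3.1}, exploiting that, since $\process{F}$ is a L\'evy process, its characteristics are deterministic (linear in $t$). Consequently the ergodic averaging step---which was the main analytic engine of Theorem \ref{tm3.1} and relied on the mixing estimate \eqref{eq4.5} of the torus-projected process---collapses trivially, and the required $L^{2}(\mathbb{P}_{\mu},\Omega)$-convergences \eqref{eq5.2}--\eqref{eq5.4} reduce to pointwise convergences of deterministic scalars.

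Set $\mu:=b+\int_{B^{c}(0,1)}y\,\nu(dy)$ and $X^{\varepsilon}_{t}:=\varepsilon F_{\varepsilon^{-2}t}-\varepsilon^{-1}t\mu$. By \cite[Lemma 3.2 and Theorem 3.5]{rene-holder} and \cite[Proposition II.2.17]{jacod}, $\process{X^{\varepsilon}}$ is a semimartingale whose modified characteristics relative to a truncation $h$ with $h(y)=y$ on $B(0,\delta)$ read
\begin{align*}
B(h)_{t}^{\varepsilon,i}&=\frac{t(b_i-\mu_i)}{\varepsilon}+\frac{t}{\varepsilon^{2}}\int_{\R^{d}}\bigl(h_i(\varepsilon y)-\varepsilon y_i 1_{B(0,1)}(y)\bigr)\nu(dy),\\
\tilde{C}(h)_{t}^{\varepsilon,ij}&=tc_{ij}+\frac{t}{\varepsilon^{2}}\int_{\R^{d}}h_i(\varepsilon y)h_j(\varepsilon y)\,\nu(dy),\\
N^{\varepsilon}(ds,B)&=\frac{ds}{\varepsilon^{2}}\int_{\R^{d}}1_{B}(\varepsilon y)\,\nu(dy),
\end{align*}
so by \cite[Theorem VIII.2.17]{jacod} it suffices to verify \eqref{eq5.2}--\eqref{eq5.4} for $X^{\varepsilon}$ with $\Sigma$ as in \eqref{eq5.8}. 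Splitting each integral at $|y|=\delta/\varepsilon$ and using $h_i(\varepsilon y)-\varepsilon y_i 1_{B(0,1)}(y)=\varepsilon y_i 1_{B^{c}(0,1)}(y)$ on $\{|y|\leq\delta/\varepsilon\}$ (valid for $\varepsilon<\delta$), the choice of $\mu$ cancels the surviving $\varepsilon^{-1}$-order piece of $B(h)_{t}^{\varepsilon,i}$, leaving only tail terms bounded by $\delta^{-2}\int_{|y|>\delta/\varepsilon}|y|^{2}\nu(dy)$; likewise $\varepsilon^{-2}\int h_i(\varepsilon y)h_j(\varepsilon y)\nu(dy)=\int_{|y|\leq\delta/\varepsilon}y_iy_j\,\nu(dy)+O\bigl(\delta^{-2}\int_{|y|>\delta/\varepsilon}|y|^{2}\nu(dy)\bigr)\to\int_{\R^{d}}y_iy_j\,\nu(dy)$, and for $g\in C_b(\R^{d})$ vanishing on $B(0,\delta)$ one has $\bigl|\int_{0}^{t}\int g(y)N^{\varepsilon}(ds,dy)\bigr|\leq t\|g\|_{\infty}\delta^{-2}\int_{|y|>\delta/\varepsilon}|y|^{2}\nu(dy)$. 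All three tail quantities vanish as $\varepsilon\searrow 0$ by (\textbf{C5}) and dominated convergence.

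The only real obstacle is the careful bookkeeping of the centering. In the absence of condition (\textbf{C6}), which killed the first-moment tail of jumps of size $\geq 1$ in Theorem \ref{tm3.1}, one must explicitly subtract $\varepsilon^{-2}t\mu$ to absorb both the genuine drift $b$ and the large-jump compensator $\int_{B^{c}(0,1)}y\,\nu(dy)$, which would otherwise produce a non-vanishing $\varepsilon^{-1}t$-order term under diffusive scaling. Once this centering is in place, (\textbf{C5}) delivers the necessary tail estimates uniformly and the remainder reduces to an algebraic verification.
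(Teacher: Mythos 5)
Your proposal is correct and follows essentially the same approach as the paper: define the centered L\'evy process $\bar{F}_t = F_t - t(b+\int_{B^c(0,1)}y\,\nu(dy))$, compute the modified semimartingale characteristics of $\{\varepsilon\bar{F}_{\varepsilon^{-2}t}\}$, invoke \cite[Theorem VIII.2.17]{jacod}, and verify the three deterministic convergences using (\textbf{C5}). Your write-up is actually more explicit in the final verification (splitting at $|y|=\delta/\varepsilon$ and bounding the tails), whereas the paper merely remarks that the relations are ``easily verified by employing (\textbf{C5}).''
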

\begin{proof}
First, define $$\bar{F}_t:=F_t-t\left(b+\int_{B^{c}(0,1)}y\,\nu(dy)\right),\quad t\geq0.$$ Clearly, $\process{\bar{F}}$ is  a L\'evy process determined by a  symbol of the form $$\bar{q}(\xi)=q(\xi)+i\left\langle\xi,b+\int_{B^{c}(0,1)}y\,\nu(dy)\right\rangle,$$ where $q(\xi)$ is the symbol of $\process{F}.$
Next, as in the proof of Theorem \ref{tm3.1}, we conclude that for any initial distribution $\mu(dx)$ of $\process{F}$, $\{\varepsilon\bar{F}_{\varepsilon^{-2}t}\}$, $\varepsilon>0$, are $\mathbb{P}_{\mu}$-semimartingales  whose (modified) characteristics (relative to a truncation function $h(x)$) are given by
  \begin{align*}B(h)^{\varepsilon,i}_t&=\frac{1}{\varepsilon^{2}}\int_0^{t}\int_{\R^{d}}\left(h_i(\varepsilon y)-\varepsilon y_i1_{B(0,1)}(y)\right)\nu(dy)ds -\frac{t}{\varepsilon}\int_{B^{c}(0,1)}y_i\nu(dy),\\
C^{\varepsilon,ij}_t&=tc_{ij},\\
N^{\varepsilon}(ds,B)&=\frac{1}{\varepsilon^{2}}\int_{\R^{d}}1_{B}\left(\varepsilon y\right)\nu(dy)ds,\quad B\in\mathcal{B}(\R^{d}),\\
\tilde{C}(h)^{\varepsilon,ij}_t&=tc_{ij}+\frac{t}{\varepsilon^{2}}\int_{\R^{d}}h_{i}\left(\varepsilon y\right)h_{j}\left(\varepsilon y\right)\nu(dy)ds,
\end{align*} $i,j=1,\ldots,d$.
Again, according to
\cite[Theorem VIII.2.17]{jacod}, in order to prove the desired convergence it suffices to prove that
$$B(h)^{\varepsilon,i}_t\xrightarrow{\varepsilon\searrow0}0
$$ for all  $t\geq0$ and all $i=1,\ldots,d,$
$$\int_0^{t}\int_{\R^{d}}g(y)N^{\varepsilon}(ds,dy)\xrightarrow{\varepsilon\searrow0}0
$$ for all $t\geq0$ and all $g\in C_b(\R^{d})$
vanishing in a neighborhood of the origin,
and $$\tilde{C}(h)^{\varepsilon,ij}_t\xrightarrow{\varepsilon\searrow0}\Sigma^{ij}$$
for all  $t\geq0$ and all $i,j=1,\ldots,d$, where $\Sigma$ is given in \eqref{eq5.8}. But, the above relations can be easily verified  by employing ($\textbf{C5}$).
\end{proof}

Now, we prove Theorem \ref{tm3.7}.

\begin{proof}[Proof of Theorem \ref{tm3.7}]
Denote by $\mathbb{D}(\R^{d})$  the space of all $\R^{d}$-valued c\`adl\`ag  functions $\alpha:[0,\infty)\longrightarrow\R^{d}$.
It is well known that $\mathbb{D}(\R^{d})$ admits a metrizable topology, called the Skorokhod topology, for which $\mathbb{D}(\R^{d})$ is Polish space (a complete metrizable space).  For more on the Skorokhod topology and space $\mathbb{D}(\R^{d})$ we refer the readers to
\cite{billingsley-book} and \cite{jacod}. Next, let $O\subseteq\R^{d}$ be an  arbitrary  set. Define a function $\tau_O:\mathbb{D}(\R^{d})\longrightarrow[0,\infty]$ by $$\tau_O(\alpha):=\inf\{t>0:\alpha(t)\notin O\}.$$ If $O$ is  open, according to \cite[Lemma 8]{rus}, the set of  continuity points of $\tau_O(\alpha)$ contains the set
$$\mathcal{C}:=\left\{\alpha\in\mathbb{D}(\R^{d}):\lim_{r\longrightarrow0}\tau_{O^{r+}}(\alpha)=\lim_{r\longrightarrow0}\tau_{O^{r-}}(\alpha)=\tau_O(\alpha)\right\},$$ where
$$O^{r+}:=\{x\in\R^{d}:d(x,O)<r\},\quad O^{r-}:=\{x\in\R^{d}:d(x,O^{c})>r\}$$ and $d(x,y):=|x-y|$, $x,y\in\R^{d}$, denotes the standard Euclidian metric on $\R^{d}$. Assume that $O$ satisfies the assumptions from the statement of the theorem and let us prove that  $\mathcal{C}$
contains the sample paths of $\process{W}$ $\mathbb{P}_x$-a.s., $x\in\R^{d}$. Clearly, in order to prove the assertion, it suffices to prove that
\begin{equation}\label{eq5.9}\mathbb{P}_x(\tau_{O\cup\partial O}(\process{W})=0)=\mathbb{P}_x(\tau_{O^{c}}(\process{W})=0)=1,\quad x\in\partial O.\end{equation}
If $d=1$, the assertion immediately follows from   Blumenthal's $0$-$1$ law (see \cite[Theorem 2.8]{peres}).
Assume that $d\geq2$. Then, according to \cite[Theorem 8.3]{peres},  $$\mathbb{P}_x(\tau_{O}(\process{W})=0)=\mathbb{P}_x(\tau_{{\rm Int}\,O^{c}}(\process{W})=0)=1,\quad x\in\partial O.$$
Clearly,
\begin{align*}&\{\omega\in\Omega:\tau_{O}(\{W_t(\omega)\}_{t\geq0})=0\}\setminus\{\omega\in\Omega:\tau_{O\cup\partial O}(\{W_t(\omega)\}_{t\geq0})=0\}\\&\subseteq\{\omega\in\Omega:\textrm{there exists}\ n\geq1,\ \textrm{such that}\ W_t(\omega)\in\partial O\ \textrm{for all}\ t\in[0,1/n]\}\end{align*}and, similarly,
\begin{align*}&\{\omega\in\Omega:\tau_{{\rm Int}\,O^{c}}(\{W_t(\omega)\}_{t\geq0})=0\}\setminus\{\omega\in\Omega:\tau_{O^{c}}(\{W_t(\omega)\}_{t\geq0})=0\}\\&\subseteq\{\omega\in\Omega:\textrm{there exists}\ n\geq1,\ \textrm{such that}\ W_t(\omega)\in\partial O\ \textrm{for all}\ t\in[0,1/n]\}.\end{align*}
Furthermore, for any $x\in\R^{d}$, \begin{align*}&\mathbb{P}_x\left(\left\{\omega\in\Omega:\textrm{there exists}\ n\geq1,\ \textrm{such that}\ W_t(\omega)\in\partial O\ \textrm{for all}\ t\in[0,1/n]\right\}\right)\\&=\mathbb{P}_x\left(\bigcup_{n\geq1}\left\{\omega\in\Omega:W_t(\omega)\in\partial O\ \textrm{for all}\ t\in[0,1/n]\right\}\right)\\&\leq\sum_{n\geq1}\mathbb{P}_x\left(W_{1/n}\in\partial O\right)\\&=0,\end{align*} where in the final step we employed the assumption  that $\partial O$ has Lebesgue measure zero. Hence, the relation in \eqref{eq5.9}  follows.

Now,  we conclude that the set of continuity points of the function $\mathcal{T}_O:\mathbb{D}(\R^{d})\longrightarrow\mathbb{D}(\R^{d})\times[0,\infty]$, $\mathcal{T}_O(\alpha):=(\alpha,\tau_O(\alpha))$, also contains   the sample paths of $\process{W}$ $\mathbb{P}_x$-a.s., $x\in\R^{d}$.  Consequently, by Theorem \ref{tm3.1} and \cite[Theorem 5.1]{billingsley-book}, for any initial distribution of $\process{F}$,
\begin{equation}\label{eq5.10}\left(\left\{\varepsilon F_{\varepsilon^{-2} t}\right\}_{t\geq0},T^{\varepsilon}\right)=\mathcal{T}_O\left(\left\{\varepsilon F_{\varepsilon^{-2} t}\right\}_{t\geq0}\right)\xrightarrow[\varepsilon\searrow0]{d}\mathcal{T}_O\left(\{W_t\}_{t\geq0}\right)=\left(\{W_t\}_{t\geq0},T^{0}\right),\end{equation}
which together with \cite[Corollary 2.5]{aldous} proves \eqref{eq3.7}.

To prove the relation in \eqref{eq3.8}, define a function $\mathcal{S}_O:\mathbb{D}(\R^{d})\longrightarrow\mathbb{D}(\R^{d})$ by $\mathcal{S}_O(\alpha):=\alpha(\cdot\wedge\tau_O(\alpha))$, and  let us prove that  the set of continuity points of $\mathcal{S}_O(\alpha)$ also contains   the sample paths of $\process{W}$ $\mathbb{P}_x$-a.s., $x\in\R^{d}$.
According to \cite[Proposition VI.1.17]{jacod}, it suffices to show that for every $\omega\in\Omega$ such that $\{W_{t}(\omega)\}_{t\geq0}$ is continuous and every sequence $\{\alpha_n\}_{n\geq1}\subseteq\mathbb{D}(\R^{d})$ converging (in the Skorokhod topology) to $\{W_{t}(\omega)\}_{t\geq0}$,  $\mathcal{S}_O(\alpha_n)\xrightarrow{n\nearrow\infty} \mathcal{S}_O(\{W_t(\omega)\}_{t\geq0})$ locally uniformly (or, equivalently, in the Skorokhod topology).
Fix an arbitrary $\omega\in\Omega$ such that $\{W_{t}(\omega)\}_{t\geq0}$ is continuous.
If $\tau_O(\{W_t(\omega)\}_{t\geq0})=\infty$,  the claim easily follows from the continuity of the function $\tau_O(\alpha)$ at sample paths of $\process{W}$ $\mathbb{P}_x$-a.s., $x\in\R^{d}$. Assume now that $\tau_O(\{W_t(\omega)\}_{t\geq0})=t<\infty$, and let $\{\alpha_n\}_{n\geq1}$ be an arbitrary sequence in $\mathbb{D}(\R^{d})$ converging to $\{W_t(\omega)\}_{t\geq0}$ locally uniformly. Because of the continuity of $\{W_t(\omega)\}_{t\geq0}$, for an arbitrary $\epsilon>0$ there exit $0<\delta_\epsilon<t$ and $n_\epsilon\geq1$, such that $|W_s(\omega)-W_u(\omega)|<\epsilon$ for all $s,u\in[t-\delta_\epsilon,t+\delta_\epsilon]$, $|W_s(\omega)-\alpha_n(s)|<\epsilon$ for all $s\in[t-\delta_\epsilon,t+\delta_\epsilon]$ and all $n\geq n_\epsilon$ and $|\tau_O(\alpha_n)-t|<\delta_\epsilon$ for all $n\geq n_\epsilon$. If $K$ is any compact subset of $[0,t-\delta_\epsilon]$ or $[t+\delta_\epsilon,\infty)$, the claim follows trivially. On the other hand, if $K=[t-\epsilon,t+\epsilon]$,
\begin{align*}&\sup_{s\in K}|\mathcal{S}_O(\alpha_n)-\mathcal{S}_O(\{W_t(\omega)\}_{t\geq0})|\\&=\sup_{s\in K}|\alpha_n(s\wedge\tau_O(\alpha_n))-W_{s\wedge t}(\omega)|\\&\leq\sup_{s\in K}|\alpha_n(s\wedge\tau_O(\alpha_n))-W_{s\wedge \tau_O(\alpha_n)}(\omega)|+ \sup_{s\in K}|W_{s\wedge\tau_O(\alpha_n)}(\omega)-W_{s\wedge t}(\omega)|\\
&\leq\sup_{s\in K}|\alpha_n(s)-W_{s}(\omega)|+\sup_{s,u\in K}|W_{s}(\omega)-W_{u}(\omega)|\\
&\leq2\epsilon.\end{align*}
Finally, similarly as in \eqref{eq5.10},  Theorem \ref{tm3.1} and \cite[Theorem 5.1]{billingsley-book} imply that for any initial distribution of $\process{F}$, $$\left\{\varepsilon F_{\varepsilon^{-2} (t\wedge T^{\varepsilon})}\right\}_{t\geq0}=\mathcal{S}_O\left(\left\{\varepsilon F_{\varepsilon^{-2} t}\right\}_{t\geq0}\right)\xrightarrow[\varepsilon\searrow0]{d}\mathcal{S}_O\left(\{W_t\}_{t\geq0}\right)=\{W_{t\wedge T^{0}}\}_{t\geq0},$$
which is \eqref{eq3.8}.
\end{proof}

\section{Non-Zero Drift Case}\label{s6}

In this section, we discuss the non-zero drift case. As we commented in the first section, in  \cite{benso-lions-book} and  \cite{bhat2} the authors have considered the homogenization of a $d$-dimensional diffusion  $\{\varepsilon F_{\varepsilon^{-2}t}\}_{t\geq0}$  determined by $\tau$-periodic coefficients $b(x)=(b_i(x))_{1\leq i\leq d}$ and $c(x)=(c_{ij})_{1\leq i,j\leq d}$, such that $c(x)$ is symmetric and uniformly elliptic, $b_i\in C^{1}(\R^{d})$ with H\"olderian  derivative  and $c_{ij}\in C^{2}(\R^{d})$  with H\"olderian first derivative  (in particular, these assumptions ensure that $\process{F}$ satisfies (\textbf{C3}) and (\textbf{C4})). More precisely,  in \cite[Lemma 3.4.1]{benso-lions-book} they have first shown that there exist unique $\tau$-periodic $\beta_i\in C^{2}(\R^{d})$, $i=1,\ldots,d$, satisfying $$\mathcal{A}^{\infty}\beta_i(x)=b_i(x)-\bar{b}_i,\quad x\in\R^{d},$$ where $\mathcal{A}^{\infty}$ denotes the Feller generator of $\process{F}$, $\bar{b}_i:=\int_{[0,\tau]}b(x)\pi(dx)$ and $\pi(dx)$ is again the unique invariant probability measure associated to $\process{F^{\tau}}$.  Then, by employing the above representation of the drift function, they have obtained that
$$\left\{\varepsilon F_{\varepsilon^{-2}t}-\varepsilon^{-1}\bar{b}t\right\}_{t\geq0} \xrightarrow[\varepsilon\searrow0]{d}\process{\bar{W}}$$
for any initial distribution of $\process{F}$.
Here, $\bar{b}:=(\bar{b}_i)_{1\leq i\leq d}$ and $\process{\bar{W}}$ is  a $d$-dimensional zero-drift Brownian motion starting from the origin and
determined by the covariance matrix  given in \eqref{eq1.4}.

 On the other hand, if $\process{F}$ is a diffusion with jumps with L\'evy triplet $(b(x),c(x),\nu(x,dy))$ satisfying conditions (\textbf{C3})-(\textbf{C6}). Then, by using the approach through the characteristics of semimartingales, the convergence
$$\left\{\varepsilon F_{\varepsilon^{-2}t}-\varepsilon^{-1}\bar{b}t\right\}_{t\geq0}\xrightarrow[\varepsilon\searrow0]{d}\process{\tilde{W}}$$
reduces to the convergences
\begin{equation}\label{eq6.1}\frac{1}{\varepsilon}\int_0^{t}(b_i(F_{\varepsilon^{-2}s})-\bar{b}_i)ds\xrightarrow[\varepsilon\searrow0]{\mathbb{P_\mu}}0,\quad i=1,\ldots,d,\end{equation}
\eqref{eq5.2}, \eqref{eq5.3} and \eqref{eq5.4},  where $\mu(dx)$ is an arbitrary initial distribution od $\process{F}$ and $\process{\tilde{W}}$
is a $d$-dimensional Brownian motion starting from the origin whose covariance matrix has to be
determined.    Note that $\left\{ F_{t}-\bar{b}t\right\}_{t\geq0}$ is again a diffusion with jumps which satisfies (\textbf{C3})-(\textbf{C6}).
However, the relation in \eqref{eq6.1} never holds, unless $b(x)=\bar{b}$ ($dx$-a.e.). Let us be more precise. Let $\process{P^{\tau}}$ be the semigroup of $\process{F^{\tau}}$ (with respect to $(B_b([0,\tau]),||\cdot||_{\infty})$).  Observe that, since $\pi(dx)$ is an
invariant probability measure for $\process{F^{\tau}}$, $\process{P^{\tau}}$ can be naturally
 (and uniquely) extended to a
positivity preserving contraction semigroup on
$(L^{2}([0,\tau],\pi(dx)),||\cdot||_2),$ which we again denote by $\process{P^{\tau}}$. Indeed, for an arbitrary $f\in L^{2}([0,\tau],\pi(dx))$, we have $$||f||_2^{2}=\int_{[0,\tau]}f^{2}(x)\pi(dx)=\int_{[0,\tau]}P_t^{\tau}f^{2}(x)\pi(dx)\geq\int_{[0,\tau]}\left(P_t^{\tau}f(x)\right)^{2}\pi(dx)=||P_t^{\tau}f||_2^{2}.$$
The positivity of $\process{P^{\tau}}$ is trivially satisfied. Next,
the
infinitesimal generator
$(\mathcal{A}_{\tau}^{2},\mathcal{D}_{\mathcal{A}_{\tau}^{2}})$ of
the semigroup $\process{P^{\tau}}$,  with respect to
$(L^{2}([0,\tau],\pi(dx)),||\cdot||_2)$,  is a linear
operator
$\mathcal{A}_{\tau}^{2}:\mathcal{D}_{\mathcal{A}_{\tau}^{2}}\longrightarrow
L^{2}([0,\tau],\pi(dx))$ defined by
$$\mathcal{A}_{\tau}^{2}f:=
  \lim_{t\longrightarrow0}\frac{P^{\tau}_tf-f}{t},\quad f\in\mathcal{D}_{\mathcal{A}_{\tau}^{2}}:=\left\{f\in L^{2}([0,\tau],\pi(dx)):
\lim_{t\longrightarrow0}\frac{P^{\tau}_t f-f}{t} \
\textrm{exists in}\ ||\cdot||_2\right\}.$$
Now,  \cite[Theorem 2.1]{bhat} states that if $f=\mathcal{A}_\tau^{2}g$ for some $g\in \mathcal{D}_{\mathcal{A}_{\tau}^{2}}$ (note that in this case, due to the stationarity of $\pi(dx)$, $\int_{[0,\tau]}f(x)\pi(dx)=0$), then
for any initial distribution of $\process{F}$,
$$\left\{\varepsilon^{-1}\int_0^{t}f(F_{\varepsilon^{-2}s})ds\right\}_{t\geq0}\xrightarrow[\varepsilon\searrow0]{d}\process{W^{1}},$$ where $\process{W^{1}}$ is a one-dimensional zero-drift Brownian motion starting from the origin and determined by the variance parameter $\sigma^{2}=-2\int_{[0,\tau]}f(x)g(x)\pi(dx).$  In what follows we show that  every $\tau$-periodic $f\in B_b(\R^{d})$, that is, its restriction to $[0,\tau]$, satisfying  $\int_{[0,\tau]}f(x)\pi(dx)=0$ is always contained in the range of  $A_\tau^{2}$ (see also \cite[Remark 2.3.1]{bhat}).
Thus, due to the fact that  $p(t,x,dy)$, $t>0$, $x\in\R^{d},$ and $dy$ are mutually absolutely continuous,  \cite[Proposition 2.4]{bhat} implies that \eqref{eq6.1} holds if, and only if,  $b(x)=\bar{b}$ ($dx$-a.e.).

\begin{proposition}\label{p6.1}
 Let $\process{F}$ be a $d$-dimensional diffusion with jumps with  $B_b$-generator $(\mathcal{A}^{b},\mathcal{D}_{\mathcal{A}^{b}})$, satisfying conditions (\textbf{C3}) and (\textbf{C4}). Then, $$\mathcal{E}:=\{f\in C^{2}(\R^{d}):f(x)\ \textrm{is}\
\tau\textrm{-periodic}\}\subseteq\mathcal{D}_{\mathcal{A}^{b}},\quad\mathcal{E}_{\tau}:=\{f|_{[0,\tau]}:f\in \mathcal{E}\}\subseteq\mathcal{D}_{\mathcal{A}_{\tau}^{2}}$$ and, on $\mathcal{E}_{\tau}$ (that is, $\mathcal{E}$), $\mathcal{A}_{\tau}^{2}f|_{[0,\tau]}=(\mathcal{A}^{b}f)|_{[0,\tau]}.$
 \end{proposition}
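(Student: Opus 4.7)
The plan is to apply It\^o's formula to $f \in \mathcal{E}$ using the semimartingale representation of $\process{F}$ available from \cite[Lemma 3.2 and Theorem 3.5]{rene-holder} (the same tool used in the proof of Theorem \ref{tm3.1}), and then read off the $B_b$-generator from Dynkin's formula. For any $f\in\mathcal{E}$, periodicity together with $C^{2}$-regularity forces $f$, $\nabla f$ and $D^{2}f$ to be uniformly bounded on $\R^{d}$. By (\textbf{C4}) the coefficients $b(x)$, $c(x)$, $\nu(x,dy)$ of the symbol are $\tau$-periodic and, being locally bounded, are uniformly bounded on $\R^{d}$. Splitting the jump integral over $\{|y|\leq 1\}$ and $\{|y|>1\}$ and using $\sup_{x}\int(1\wedge|y|^{2})\nu(x,dy)<\infty$, the integro-differential operator
\begin{equation*}
\mathcal{L}f(x):=\sum_{i=1}^{d}b_i(x)\frac{\partial f(x)}{\partial x_i}+\frac{1}{2}\sum_{i,j=1}^{d}c_{ij}(x)\frac{\partial^{2}f(x)}{\partial x_i\partial x_j}+\int_{\R^{d}}\!\left(f(x+y)-f(x)-\sum_{i=1}^{d}y_i\frac{\partial f(x)}{\partial x_i}1_{B(0,1)}(y)\right)\!\nu(x,dy)
\end{equation*}
is well-defined, bounded and $\tau$-periodic.

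It\^o's formula then yields $f(F_t)-f(F_0)=M^{f}_{t}+\int_{0}^{t}\mathcal{L}f(F_s)\,ds$ with $\process{M^{f}}$ a local martingale; the above uniform bounds promote $M^{f}$ to a true martingale, so that taking $\mathbb{P}_{x}$-expectations gives Dynkin's formula $P_tf(x)-f(x)=\int_{0}^{t}P_s(\mathcal{L}f)(x)\,ds$ for all $t\geq 0$ and $x\in\R^{d}$, whence
\[
\left\|\frac{P_tf-f}{t}-\mathcal{L}f\right\|_{\infty}\leq\frac{1}{t}\int_{0}^{t}\|P_s(\mathcal{L}f)-\mathcal{L}f\|_{\infty}\,ds.
\]
Because $\mathcal{L}f$ is $\tau$-periodic and bounded, the identity $P_s(\mathcal{L}f)(x)=P_s^{\tau}g(\Pi_\tau(x))$ with $g:=\mathcal{L}f|_{[0,\tau]}$ (valid by the discussion around \eqref{eq4.2}) transfers the problem to the compact torus. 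The continuity and strict positivity of $p^{\tau}(t,x,y)$ from (\textbf{C3}) imply that $P_s^{\tau}$ maps $B_b([0,\tau])$ into $C([0,\tau])$ for each $s>0$, and the Doeblin-type contraction \eqref{eq4.5} established in Section \ref{s4} allows one to control $\|P_s^{\tau}g-g\|_{\infty}$ in a Ces\`aro sense. This yields $f\in\mathcal{D}_{\mathcal{A}^{b}}$ with $\mathcal{A}^{b}f=\mathcal{L}f$.

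For the second inclusion $\mathcal{E}_{\tau}\subseteq\mathcal{D}_{\mathcal{A}_{\tau}^{2}}$, note that $\pi$ is a probability measure on the compact set $[0,\tau]$, so uniform convergence on $[0,\tau]$ of $(P_t^{\tau}g-g)/t$ implies convergence in $L^{2}(\pi)$; combined with $P_tf(x)=P_t^{\tau}(f|_{[0,\tau]})(\Pi_\tau(x))$ for $\tau$-periodic $f$, this gives $\mathcal{A}_{\tau}^{2}f|_{[0,\tau]}=(\mathcal{A}^{b}f)|_{[0,\tau]}=(\mathcal{L}f)|_{[0,\tau]}$ on $\mathcal{E}_{\tau}$ at once. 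The delicate step is the uniform-in-$x$ estimate in the second paragraph: since the coefficients are only assumed measurable, $\mathcal{L}f$ need not be continuous and the standard strong continuity of $P_s^{\tau}$ on $C([0,\tau])$ cannot be invoked directly on $g$; instead, the smoothing effect of $p^{\tau}(s,\cdot,\cdot)$ for $s>0$ (which places $P_s^{\tau}g$ in $C([0,\tau])$) must be combined with the exponential mixing \eqref{eq4.5} to control the Ces\`aro average uniformly on $[0,\tau]$.
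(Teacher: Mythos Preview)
Your route via It\^o/Dynkin's formula to obtain $P_tf(x)-f(x)=\int_0^t P_s(\mathcal{L}f)(x)\,ds$ and the resulting Ces\`aro bound is exactly how the paper proceeds (there via \cite[Corollary 3.6]{rene-holder}), and your passage from the sup-norm to the $L^{2}(\pi)$-norm for the second inclusion is also the paper's argument. The divergence is in how you send $t\to 0$ in $\frac{1}{t}\int_0^t\|P_s(\mathcal{L}f)-\mathcal{L}f\|_\infty\,ds$.

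There is a genuine gap at that step. You invoke the Doeblin contraction \eqref{eq4.5}, but \eqref{eq4.5} bounds $|P_t^\tau 1_B(x)-\pi(B)|$ for \emph{large} $t$; it says nothing about $\|P_s^\tau g-g\|_\infty$ as $s\downarrow 0$. Likewise, the smoothing $P_s^\tau g\in C([0,\tau])$ for $s>0$ does not force $P_s^\tau g\to g$ uniformly when $g$ is merely bounded measurable (take $g=1_{[0,1/2]}$ under the heat semigroup on the circle: $\|P_s g-g\|_\infty\not\to 0$). So the tools you name cannot close the argument. The paper sidesteps this by observing that your worry about discontinuity of $\mathcal{L}f$ is unfounded: for a Feller process satisfying (\textbf{C1}) one actually has $\mathcal{L}:C_b^{2}(\R^{d})\to C_b(\R^{d})$ (see \cite[Remark 4.5]{rene-conserv}). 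Since $\mathcal{L}f$ is then continuous and $\tau$-periodic, hence bounded and uniformly continuous, a standard strong-continuity result for Feller semigroups on such functions (the paper cites \cite[Lemma 4.8.7]{jacobI}) yields $\sup_{x}|P_s\mathcal{L}f(x)-\mathcal{L}f(x)|\to 0$ as $s\to 0$, which is exactly what is needed.
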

 \begin{proof}
 First, we show that
$\mathcal{E}\subseteq\mathcal{D}_{\mathcal{A}^{b}}$ and,
on this class of functions,
$\mathcal{A}^{b}$ has the representation   \eqref{eq1.1}.
Let  $\process{P}$ be the  semigroup of $\process{F}$ and let $\mathcal{L}:C_b^{2}(\R^{d})\longrightarrow B_b(\R^{d})$ be defined by
the relation in (\ref{eq1.1}), where $C^{k}_b(\R^{d})$,
$k\geq0$, denotes the space of $k$ times differentiable functions
such that all derivatives up to order $k$ are bounded. Observe that
actually  $\mathcal{L}:C_b^{2}(\R^{d})\longrightarrow C_b(\R^{d})$ (see
\cite[Remark 4.5]{rene-conserv}).   Further, by \cite[Corollary
3.6]{rene-holder},
$$\mathbb{E}_{x}\left[f(F_t)-\int_0^{t}\mathcal{L}f(F_s)ds\right]=f(x),\quad
x\in\R^{d},\ f\in C^{2}_b(\R^{d}).$$
Consequently, for any $\tau$-periodic  $f\in
C^{2}(\R^{d})$,
\begin{align*}\lim_{t\longrightarrow0}\left|\left|\frac{P_tf-f}{t}-\mathcal{L}f\right|\right|_{\infty}&=\lim_{t\longrightarrow0}\left|\left|\frac{1}{t}\int_0^{t}(P_s\mathcal{L}f-\mathcal{L}f)ds\right|\right|_\infty\\&
\leq\lim_{t\longrightarrow0}\frac{1}{t}\int_0^{t}\sup_{x\in[0,\tau]}|P_s\mathcal{L}f(x)-\mathcal{L}f(x)|ds\\&=0,\end{align*}
where in the second step we used the fact that $x\longmapsto
\mathcal{L}f(x)$ is also $\tau$-periodic and in the final step we  applied \cite[Lemma 4.8.7]{jacobI}.

Finally, we show that $\mathcal{E}_{\tau}\subseteq\mathcal{D}_{\mathcal{A}_{\tau}^{2}}$ and $\mathcal{A}_{\tau}^{2}f|_{[0,\tau]}=(\mathcal{A}^{b}f)|_{[0,\tau]},$ $f\in\mathcal{E}$.  Let $f\in \mathcal{E}$. Then, by using the $\tau$-periodicity and \eqref{eq4.2},
\begin{align*}\lim_{t\longrightarrow0}\left|\left|\frac{P^{\tau}_tf|_{[0,\tau]}-f|_{[0,\tau]}}{t}-\left(\mathcal{A}^{b}f\right)\Big|_{[0,\tau]}\right|\right|_{2}&\leq\lim_{t\longrightarrow0}\left|\left|\frac{P^{\tau}_tf|_{[0,\tau]}-f|_{[0,\tau]}}{t}-\left(\mathcal{A}^{b}f\right)\Big|_{[0,\tau]}\right|\right|_{\infty}\\&=\lim_{t\longrightarrow0}\left|\left|\frac{P_tf-f}{t}-\mathcal{A}^{b}f\right|\right|_{\infty}\\&=0,\end{align*}
which concludes the proof.
\end{proof}
\begin{proposition}\label{p6.2} Let $\process{F}$ be a $d$-dimensional diffusion with jumps with semigroup $\process{P}$, satisfying conditions (\textbf{C3}) and (\textbf{C4}). Then, $\process{P^{\tau}}$ is strongly continuous with respect to $(L^{2}([0,\tau],$ $\pi(dx)),||\cdot||_2)$, that is, $\lim_{t\longrightarrow\infty}||P_t^{\tau}f-f||_{2}=0$ for all $f\in L^{2}([0,\tau],\pi(dx)).$
\end{proposition}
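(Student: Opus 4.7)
The plan is to establish strong continuity via the standard density plus contractivity argument, reducing the problem to pointwise/sup-norm continuity on a dense subclass. The preceding paragraph already gives us contractivity of $\process{P^{\tau}}$ on $(L^{2}([0,\tau],\pi),\|\cdot\|_2)$, so we only need strong continuity on a dense class, and then an $\varepsilon/3$-type argument finishes everything.

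\smallskip

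First, I would fix the dense class $C([0,\tau])$ of continuous $\tau$-periodic functions (viewed on the torus). Since $\pi(dx)$ is a finite Borel measure on the compact metric space $[0,\tau]$, the class $C([0,\tau])$ is $\|\cdot\|_2$-dense in $L^{2}([0,\tau],\pi(dx))$ by a standard Lusin/Urysohn approximation argument. Given such an $f$, I would show the pointwise convergence $P_t^{\tau}f(x)\longrightarrow f(x)$ as $t\searrow 0$ for every $x\in[0,\tau]$: because $\process{F}$ is Feller it has càdlàg paths with $F_0=x$ under $\mathbb{P}_x$, hence $F_t\longrightarrow x$ $\mathbb{P}_x$-a.s.\ as $t\searrow0$, and the covering map $\Pi_\tau:\R^{d}\longrightarrow[0,\tau]$ is continuous, so $F_t^{\tau}=\Pi_\tau(F_t)\longrightarrow x$ on the torus. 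Continuity of $f$ together with $|f|\leq\|f\|_{\infty}$ and bounded convergence then yield $P_t^{\tau}f(x)=\mathbb{E}_x[f(F_t^{\tau})]\longrightarrow f(x)$.

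\smallskip

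Next, I would upgrade this pointwise convergence to $L^{2}$-convergence. The bound $|P_t^{\tau}f(x)-f(x)|^{2}\leq 4\|f\|_{\infty}^{2}$ together with finiteness of $\pi$ and dominated convergence gives
\[
\|P_t^{\tau}f-f\|_2^{2}=\int_{[0,\tau]}|P_t^{\tau}f(x)-f(x)|^{2}\pi(dx)\xrightarrow[t\searrow0]{}0,
\]
so $\process{P^{\tau}}$ is strongly continuous on $C([0,\tau])$ in $\|\cdot\|_2$.

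\smallskip

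Finally, for arbitrary $g\in L^{2}([0,\tau],\pi(dx))$ and $\varepsilon>0$, pick $f\in C([0,\tau])$ with $\|g-f\|_2<\varepsilon/3$. Using the $L^{2}$-contractivity $\|P_t^{\tau}(g-f)\|_2\leq\|g-f\|_2$ established in the text just preceding the statement,
\[
\|P_t^{\tau}g-g\|_2\leq\|P_t^{\tau}(g-f)\|_2+\|P_t^{\tau}f-f\|_2+\|f-g\|_2\leq 2\|g-f\|_2+\|P_t^{\tau}f-f\|_2,
\]
and choosing $t$ small enough that $\|P_t^{\tau}f-f\|_2<\varepsilon/3$ yields $\|P_t^{\tau}g-g\|_2<\varepsilon$, as desired. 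The only subtle point (and therefore the main thing to watch for) is confirming that the pointwise convergence $F_t^{\tau}\longrightarrow x$ really holds uniformly enough to pass through the continuous function $f$ on the torus—this is handled by the càdlàg Feller path property plus continuity of $\Pi_\tau$, so no further regularity on the kernel $\nu(x,dy)$ is needed beyond what has already been assumed.
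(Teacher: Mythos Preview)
Your proof is correct and follows essentially the same density-plus-contractivity scheme as the paper's. The only cosmetic difference is that the paper works with the smaller dense class $\mathcal{E}_\tau$ (restrictions of $\tau$-periodic $C^{2}$ functions), on which sup-norm strong continuity is available from Proposition~\ref{p6.1}, and then invokes Stone--Weierstrass to pass to $C([0,\tau])$; your direct route through $C([0,\tau])$ via c\`adl\`ag paths and dominated convergence is equally valid and bypasses both of those steps.
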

\begin{proof} First, recall that the space $C([0,\tau])$ is dense in $L^{2}([0,\tau],\pi(dx))$ with respect to $||\cdot||_2$ (see \cite[Proposition 7.9]{folland}). Next, according to the Stone-Weierstrass theorem (see \cite[Theorem 4.45]{folland}), $C([0,\tau])$ is the closure of the set $\mathcal{E}_{\tau}$ (with respect to $||\cdot||_{\infty}$). Therefore, $\mathcal{E}_{\tau}$ is dense in $L^{2}([0,\tau],\pi(dx))$ (with respect to $||\cdot||_2$). Further, because of the $\tau$-periodicity and \eqref{eq4.2}, it is easy to see that $\process{P^{\tau}}$ is strongly continuous on $\mathcal{E}_{\tau}$ (even with respect to $||\cdot||_\infty$). Finally, let $f\in L^{2}([0,\tau],\pi(dx))$ and $\varepsilon>0$ be arbitrary and let $f_\varepsilon\in\mathcal{E}_{\tau}$ be such that $||f-f_\varepsilon||_2<\varepsilon$. Then,  \begin{align*}||P_t^{\tau}f-f||_2&\leq||P_t^{\tau}f-P_t^{\tau}f_\varepsilon||_2+||P_t^{\tau}f_\varepsilon-f_\varepsilon||_2+||f_\varepsilon-f||_2\\&\leq
2||f-f_\varepsilon||_2+||P_t^{\tau}f_\varepsilon-f_\varepsilon||_2\\&\leq2\varepsilon+||P_t^{\tau}f_\varepsilon-f_\varepsilon||_2,\end{align*} which proves the assertion.
\end{proof}

\begin{proposition}\label{p6.3} Let $\process{F}$ be a $d$-dimensional diffusion with jumps with semigroup $\process{P}$, satisfying conditions (\textbf{C3}) and (\textbf{C4}). Then,  for any  $\tau$-periodic $f\in B_b(\R^{d})$ satisfying  $\int_{[0,\tau]}f(x)\pi(dx)=0$
there exists $g\in\mathcal{D}_{\mathcal{A}_\tau^{2}}$ such
that $f(x)=\mathcal{A}_\tau^{2} g(x)$.
\end{proposition}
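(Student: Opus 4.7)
The plan is to construct $g$ explicitly as a potential, namely $g(x) := -\int_0^\infty P_t^\tau f(x)\,dt$, and then verify that this integral defines an element of $\mathcal{D}_{\mathcal{A}_\tau^2}$ on which the generator produces $f$.

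First, I will use the Doeblin-type bound in (\ref{eq4.5}) to show that $P_t^\tau f$ decays exponentially in $t$ uniformly in $x\in[0,\tau]$. Since the total variation bound in (\ref{eq4.5}) translates into $\|P_t^\tau(x,\cdot)-\pi\|_{TV}\leq 2Ce^{-ct}$, and since $\int_{[0,\tau]} f\,d\pi = 0$, we can write
$$P_t^\tau f(x) = \int_{[0,\tau]} f(y)\bigl(P_t^\tau(x,dy)-\pi(dy)\bigr),$$
which gives $|P_t^\tau f(x)| \leq 2C\|f\|_\infty e^{-ct}$ for all $x\in[0,\tau]$ and all $t\geq 0$. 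Consequently the integral $g(x) := -\int_0^\infty P_t^\tau f(x)\,dt$ converges absolutely and uniformly on $[0,\tau]$, so $g$ is a bounded Borel measurable function, hence $g\in L^2([0,\tau],\pi(dx))$.

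Next, I would verify the key identity
$$P_s^\tau g - g = \int_0^s P_t^\tau f\,dt, \qquad s\geq 0,$$
which follows from the semigroup property, Fubini's theorem (justified by the uniform exponential bound), and the change of variable $u=s+t$. Dividing by $s$ yields
$$\frac{P_s^\tau g - g}{s} = \frac{1}{s}\int_0^s P_t^\tau f\,dt.$$
Since $f$ is bounded on $[0,\tau]$ it lies in $L^2([0,\tau],\pi(dx))$, and by Proposition \ref{p6.2} the semigroup $\process{P^\tau}$ is strongly continuous on this space, so $\|P_t^\tau f - f\|_2 \to 0$ as $t\downarrow 0$. A standard Bochner-type estimate then gives
$$\left\|\frac{1}{s}\int_0^s P_t^\tau f\,dt - f\right\|_2 \;\leq\; \frac{1}{s}\int_0^s \|P_t^\tau f - f\|_2\,dt \;\longrightarrow\; 0$$
as $s\downarrow 0$. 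This shows $g\in \mathcal{D}_{\mathcal{A}_\tau^2}$ with $\mathcal{A}_\tau^2 g = f$, as required.

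The main technical obstacle is the first step: converting the pointwise ergodic bound (\ref{eq4.5}) into the uniform exponential decay of $P_t^\tau f$ for mean-zero $f$, and then justifying the interchange of integrals that yields the crucial identity $P_s^\tau g - g = \int_0^s P_t^\tau f\,dt$. Once these are established, the passage to the $L^2$ limit is a routine application of strong continuity.
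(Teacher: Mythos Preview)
Your proof is correct and follows the same overall strategy as the paper: construct $g$ as (minus) the potential $\int_0^\infty P_t^\tau f\,dt$, and then identify $\mathcal{A}_\tau^2 g$ via the identity $P_s^\tau g-g=\int_0^s P_t^\tau f\,dt$ together with the strong continuity established in Proposition~\ref{p6.2}.

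The difference lies in how convergence of the potential is obtained. You exploit that $f$ is \emph{bounded} and read off directly from \eqref{eq4.5} the uniform pointwise bound $|P_t^\tau f(x)|\le 2C\|f\|_\infty e^{-ct}$; the integral then converges absolutely and uniformly on $[0,\tau]$, and Fubini is immediate. The paper instead works entirely in $L^2([0,\tau],\pi)$: it first converts \eqref{eq4.5} into a covariance-type estimate $\bigl|\int P_t^\tau h_1\,h_2\,d\pi\bigr|\le 2C^{1/2}\|h_1\|_2\|h_2\|_2 e^{-ct/2}$ (via a mixing argument from \cite{ethier}), deduces $\|P_t^\tau f\|_2\le (2C^{1/2})^{1/2}\|f\|_2 e^{-ct/4}$, and then shows $\{\int_0^{t_n} P_s^\tau f\,ds\}_{n\ge1}$ is Cauchy in $L^2$. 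Your route is shorter and more elementary, which is legitimate here because the statement assumes $f\in B_b(\R^d)$; the paper's $L^2$ machinery would in principle accommodate unbounded $f\in L^2$, but that extra generality is not needed for the proposition as stated.
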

\begin{proof}
First, let $h_1,h_2\in L^{2}([0,\tau],\pi(dx))$
be arbitrary functions satisfying  $\int_{[0,\tau]}h_i(x)\pi(dx)=0$, $i=1,2$. Then,
since $\pi(dx)$ is an
invariant probability measure for $\process{F^{\tau}}$, we conclude
$$
\int_{[0,\tau]}P_t^{\tau}h_1(x)h_2(x)\pi(dx)=
\mathbb{E}^{\tau}_\pi\left[\mathbb{E}^{\tau}_\pi[h_1(F^{\tau}_{2t})|F_t^{\tau}]h_2(F_t^{\tau})\right]=\mathbb{E}^{\tau}_\pi\left[h_1(F^{\tau}_{2t})h_2(F_t^{\tau})\right],\quad t\geq0.$$
Now, by employing this fact, the proof of \cite[Corollary 7.2.7]{ethier} and
(\ref{eq4.5}) imply
\begin{align*}
&\left|\int_{[0,\tau]}P_t^{\tau}h_1(x)h_2(x)\pi(dx)\right|\\ &\leq 2
\left(\sup\left\{|\mathbb{E}_\pi^{\tau}\left[1_{B}|F^{\tau}_t\right]-\pi(B)|:\omega\in\Omega,\
B\in\sigma\{F_u^{\tau}: u\geq 2t\}
\right\}\right)^{1/2}||h_1||_2||h_2||_2\\&\leq2\left(\sup\left\{\left|P_t^{\tau}1_{B}(x)-\pi(B)\right|:x\in[0,\tau],\
B\in\mathcal{B}([0,\tau])\right\}\right)^{1/2}||h_1||_2||h_2||_2\\&\leq
2C^{1/2}||h_1||_2||h_2||_2e^{-ct/2}.\end{align*}
Specially,  for any $s,t\geq0$,
$$||P_t^{\tau}h_1||^{2}_2=\left|\int_{[0,\tau]} \left(P^{\tau}_th_1(x)\right)^{2}\pi(dx)\right|\leq
2C^{1/2}||h_1||_2||P_t^{\tau}h_1||_2e^{-ct/2}\leq2C^{1/2}||h_1||^{2}_2e^{-ct/2}$$
and
$$\left|\int_{[0,\tau]} P_t^{\tau}h_1(x)P^{\tau}_sh_2(x)\pi(dx)\right|\leq
2C^{1/2}||h_1||_2||P_s^{\tau}h_2||_2e^{-ct/2}\leq2^{3/2}C^{3/4}||h_1||_2||h_2||_2e^{-ct/2-cs/4}.$$

Next, observe that for each $t\geq0$, $$\int_0^{t}P^{\tau}_sf
ds\in L^{2}([0,\tau],\pi(dx)).$$ Now, we prove that
$\lim_{t\longrightarrow\infty}\int_0^{t}P^{\tau}_sf ds$ exists
in $L^{2}([0,\tau],\pi(dx))$. Let $\{t_n\}_{n\geq1}$
 be an arbitrary sequence in $[0,\infty)$,  $t_n\nearrow\infty$. Then,
the sequence $\left\{\int_0^{t_n}P^{\tau}_sf ds\right\}_{n\geq1}$ is a
Cauchy sequence in $L^{2}([0,\tau],\pi(dx))$. Indeed, for
arbitrary $n,m\in\N$, $n\leq m$,
\begin{align*}\left|\left|\int_0^{t_m}P^{\tau}_sf ds-\int_0^{t_n}P^{\tau}_sf
ds\right|\right|^{2}_2&=\int_{[0,\tau]}\left(\int_{t_n}^{t_m}P^{\tau}_sf(x)
ds\right)^{2}\pi(dx)\\&=\int_{[0,\tau]}\int_{t_n}^{t_m}\int_{t_n}^{t_m}P^{\tau}_tf(x)P^{\tau}_sf(x)dsdt\,\pi(dx)\\&\leq
\int_{t_n}^{t_m}\int_{t_n}^{t_m}\left|\int_{[0,\tau]} P^{\tau}_tf(x)
P^{\tau}_sf(x)\pi(dx)\right|dsdt\\&\leq2^{3/2}C^{3/4}||f||^{2}_2\int_{t_n}^{t_m}\int_{t_n}^{t_m}e^{-ct/2-cs/4}dsdt.
\end{align*}
Thus, since $L^{2}([0,\tau],\pi(dx))$ is a Hilbert space,
the sequence $\left\{\int_0^{t_n}P^{\tau}_sf ds\right\}_{n\geq1}$
converges to some $g\in L^{2}([0,\tau],\pi(dx)).$ By
completely the same reasoning as above, it is easy to see that the function
$g(x)$ does not depend on the choice of a sequence
$\{t_n\}_{n\geq1}$. Therefore,
$$\lim_{t\longrightarrow\infty}\int_0^{t}P^{\tau}_sf
ds=g$$ in $L^{2}([0,\tau],\pi(dx)).$ Furthermore,
because of the continuity (boundedness) of $\process{P^{\tau}}$, for
any $u\geq0$ we have
\begin{equation}\label{eq6.2}P_u^{\tau}g=\lim_{t\longrightarrow\infty}P_u^{\tau}\left(\int_0^{t}P^{\tau}_sf ds\right)=\lim_{t\longrightarrow\infty}\int_u^{t}P^{\tau}_sf ds.\end{equation}

Finally, we show that $g\in\mathcal{D}_{\mathcal{A}_\tau^{2}}$
and $\mathcal{A}_\tau^{2}(-g)=f.$ We have
$$\lim_{t\longrightarrow0}\left(\frac{P_t^{\tau}(-g)+g}{t}-f\right)=\lim_{t\longrightarrow0}\left(\frac{\int_0^{t}P^{\tau}_sf
ds}{t}-f\right)=\lim_{t\longrightarrow0}\frac{\int_0^{t}(P^{\tau}_sf-f)
ds}{t}=0,$$ where in the first step we used \eqref{eq6.2} and in the final step we employed the strong
continuity property of $\process{P^{\tau}}$  (Proposition \ref{p6.2}).
\end{proof}

\section{Long-Time Behavior of Periodic Diffusions with Small Jumps: Transience, Recurrence and Ergodicity}\label{s7}
 In this section, as one of the applications of  Theorem \ref{tm3.1}, we discuss  transience, recurrence and ergodicity of periodic diffusions with small jumps. Denote by ${\rm Leb}(dx)$ the Lebesgue measure on $\mathcal{B}(\R^{d})$.
Recall that  a c\`adl\`ag  strong
Markov process $(\Omega,\mathcal{F},\{\mathbb{P}_{x}\}_{x\in\R^{d}},\process{\mathcal{F}},\process{\theta},\process{M})$, denoted by $\process{M}$
in the sequel, is called
\begin{enumerate}
  \item [(i)] \emph{Lebesgue-irreducible} if  ${\rm Leb}(B)>0$ implies
$\int_0^{\infty}\mathbb{P}_{x}(M_t\in B)dt>0$ for all $x\in\R^{d}$;
  \item [(ii)] \emph{recurrent} if it is
                      Lebesgue-irreducible and if ${\rm Leb}(B)>0$ implies $\int_{0}^{\infty}\mathbb{P}_{x}(M_t\in B)dt=\infty$ for all
                      $x\in\R^{d}$;
\item [(iii)] \emph{Harris recurrent} if it is Lebesgue-irreducible and if ${\rm Leb}(B)>0$ implies $\mathbb{P}_{x}(\tau_B<\infty)=1$ for all
                      $x\in\R^{d}$, where $\tau_B:=\inf\{t\geq0:M_t\in
                      B\}$;
 \item [(iv)] \emph{transient} if it is Lebesgue-irreducible
                       and if there exists a countable
                      covering of $\R^{d}$ with  sets
$\{B_j\}_{j\in\N}\subseteq\mathcal{B}(\R^{d})$, such that for each
$j\geq1$ there is a finite constant $c_j\geq0$ such that
$\int_0^{\infty}\mathbb{P}_{x}(M_t\in B_j)dt\leq c_j$ holds for all
$x\in\R^{d}$.
\end{enumerate}
It is well known that
every Lebesgue-irreducible Markov process is either transient or recurrent  (see \cite[Theorem 2.3]{tweedie-mproc}). Also, clearly,
every Harris recurrent Markov process is recurrent.  But, in general,
these two properties are not equivalent. They differ on a set of
the Lebesgue measure zero (see \cite[Theorem
2.5]{tweedie-mproc}). However,
 for a diffusion with jumps satisfying
condition (\textbf{C3}) these two
 properties are indeed equivalent (see  \cite[Proposition
 2.1]{sandric-tams}). Obviously, a diffusion with jumps satisfying the conditions in (\textbf{C3}) is always Lebesgue-irreducible.
Now, we recall, for our purposes  more adequate,  characterization of
the transience and recurrence through the
sample-paths of the underlying process. Let $B\in\mathcal{B}(\R^{d})$
be  arbitrary. The \emph{sets of transient} and \emph{recurrent functions} in $\mathbb{D}(\R^{d})$, with respect to $B$, are defined as
\begin{align*}T(B)&:=\{\alpha\in\ \mathbb{D}(\R^{d}):\exists s\geq0 \ \textrm{such that} \ \alpha(t)\not\in B, \ \forall t\geq s
\}\\
R(B)&:=\{\alpha\in\ \mathbb{D}(\R^{d}):\forall n\in\N,\ \exists t\geq n \ \textrm{such that} \ \alpha(t)\in B\}
,\end{align*} respectively. Recall that $\mathbb{D}(\R^{d})$ denotes the space of $\R^{d}$-valued c\`adl\`ag functions endowed with the Skorokhod topology.
It is clear that $T(B)=R(B)^{c}$ and for any open set
$O\subseteq\R^{d}$, due to the right continuity of the elements of $\mathbb{D}(\R^{d})$, $T(O)$ and $R(O)$   are
measurable (with respect to the Borel $\sigma$-algebra generated by
the Skorokhod topology). Assume now that $\process{M}$ is a Lebesgue-irreducible Markov process and that for every  compact
set $K\subseteq\R^{d}$ there exists $t_K>0$ such that \begin{equation}\label{eq7.1}\inf_{x\in
K}\mathbb{P}_{x}(M_{t_K}\in B)
>0\end{equation} holds for all $B\in\mathcal{B}(\R^{d})$ satisfying ${\rm Leb}(B)>0$. Then, \cite[Proposition 2.4]{sandric-periodic} asserts that
$\process{M}$  is transient if,
             and only if, $$\mathbb{P}_{x}(\process{M}\in T(O))=1$$ for all $x\in\R^{d}$ and
             all
             open bounded sets $O\subseteq\R^{d}$, and it is
recurrent
if,
             and only if, $$\mathbb{P}_{x}(\process{M}\in R(O))=1$$ for all $x\in\R^{d}$ and
             all
             open bounded sets $O\subseteq\R^{d}$.
According to \cite[Corollary 3.4]{rene-conserv} and \cite[Corollary 2.2]{strong}, any diffusion with jumps satisfying  (\textbf{C3}) automatically satisfies the relation in \eqref{eq7.1}.
\begin{theorem}\label{tm7.1}
Let  $\process{F}$ be a $d$-dimensional diffusion with jumps  satisfying conditions (\textbf{C3})-(\textbf{C6}) and let
 $\process{W}$ be a  $d$-dimensional zero-drift Brownian motion
determined by the relation  in \eqref{eq3.1}. Then, $\process{F}$ is transient (recurrent) if, and only if, $\process{W}$ is transient (recurrent).
\end{theorem}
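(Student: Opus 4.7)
The plan is to combine the space--time scaling invariance of transience and recurrence with the convergence of hitting positions furnished by Theorem~\ref{tm3.7}.

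First, for every $\varepsilon>0$ the rescaled process $\tilde F^{\varepsilon}:=\{\varepsilon F_{\varepsilon^{-2}t}\}_{t\geq 0}$ is transient (resp.\ recurrent) if and only if $\process{F}$ is. This is immediate from the sample-path characterization recalled in Section~\ref{s7}, using the elementary identities $\tilde F^{\varepsilon}\in T(O)\Leftrightarrow F\in T(\varepsilon^{-1}O)$ and $\tilde F^{\varepsilon}\in R(O)\Leftrightarrow F\in R(\varepsilon^{-1}O)$, together with the invariance of the family of open bounded sets under dilation.

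Next, I would apply Theorem~\ref{tm3.7} to the open set $O_{r}:=\{y\in\R^{d}:|y|>r\}$ for $r>0$. This set satisfies the hypotheses: $\partial O_{r}=\{|y|=r\}$ has Lebesgue measure zero, both $O_{r}$ and $\mathrm{Int}\,O_{r}^{c}=B(0,r)$ are connected (for $d\geq 2$; the case $d=1$ is trivial), and the Poincar\'e cone condition is satisfied at every point of $\partial O_{r}$. Denoting by $T^{\varepsilon}_{r}$ and $T^{0}_{r}$ the first hitting times of $\overline{B(0,r)}$ for $\tilde F^{\varepsilon}$ and $W$ respectively, the stopped-process convergence \eqref{eq3.8} together with monotone passage $t\to\infty$ yields
\[
\mathbb{P}_{x}(T^{\varepsilon}_{r}<\infty)\xrightarrow[\varepsilon\searrow 0]{}v_{r}(x):=\mathbb{P}_{x}(T^{0}_{r}<\infty)\qquad(|x|>r).
\]

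For the direction ``$W$ transient $\Rightarrow$ $F$ transient'' (equivalently, ``$F$ recurrent $\Rightarrow$ $W$ recurrent''), transience of $W$ provides some $(x,r)$ with $v_{r}(x)<1$; the display above then gives $\mathbb{P}_{x}(T^{\varepsilon}_{r}<\infty)<1$ for all sufficiently small $\varepsilon$, so $\tilde F^{\varepsilon}$ is not Harris-recurrent. By the dichotomy $\tilde F^{\varepsilon}$ is transient, and by the scaling invariance above so is $\process{F}$.

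The converse ``$W$ recurrent $\Rightarrow$ $F$ recurrent'' is the main obstacle, since the pointwise convergence $\mathbb{P}_{x}(T^{\varepsilon}_{r}<\infty)\to 1$ does not by itself force the equality required for recurrence. My plan here is to argue by contradiction: assuming $F$ is transient, there exist $y_{\ast},r_{\ast}$ with $u^{F}_{r_{\ast}}(y_{\ast}):=\mathbb{P}_{y_{\ast}}(\tau^{F}_{\overline{B(0,r_{\ast})}}<\infty)<1$. The scaling identity $u^{F}_{r_{\ast}}(y_{\ast})=\mathbb{P}_{\varepsilon y_{\ast}}(T^{\tilde F^{\varepsilon}}_{\varepsilon r_{\ast}}<\infty)$ holds for every $\varepsilon>0$, while Brownian self-similarity gives $v_{\varepsilon r_{\ast}}(\varepsilon y_{\ast})=v_{r_{\ast}}(y_{\ast})=1$ under recurrence of $W$. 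A contradiction would then follow from a scale-uniform refinement of Theorem~\ref{tm3.7}; I would try to obtain this either from the exponentially fast ergodicity \eqref{eq4.5} of the toroidal projection together with the heat-kernel regularity ensured by (C3), or via a potential-theoretic comparison of the Green kernels of $\tilde F^{\varepsilon}$ and $W$ (transience of $F$ forces local integrability of its Green kernel, which at large scales should persist into the limit and contradict the non-integrability of the Brownian Green kernel in the recurrent regime).
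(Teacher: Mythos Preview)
Your displayed convergence $\mathbb{P}_{x}(T^{\varepsilon}_{r}<\infty)\to v_{r}(x)$ is not justified. From Theorem~\ref{tm3.7} (more precisely from \eqref{eq5.10}) one obtains $T^{\varepsilon}_{r}\xrightarrow{d}T^{0}_{r}$ in $[0,\infty]$, but Portmanteau then yields only $\liminf_{\varepsilon}\mathbb{P}_{x}(T^{\varepsilon}_{r}<\infty)\geq v_{r}(x)$, since $[0,\infty)$ is open but not closed and $\{\infty\}$ may carry positive mass under $T^{0}_{r}$ when $W$ is transient. ``Monotone passage $t\to\infty$'' does not upgrade this to full convergence: the deterministic example $T^{\varepsilon}=\varepsilon^{-1}$, $T^{0}=\infty$ already shows that $\mathbb{P}(T^{\varepsilon}<\infty)\equiv 1$ need not converge to $\mathbb{P}(T^{0}<\infty)=0$. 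This undermines the direction you call easy as well: from $\liminf\geq v_{r}(x)$ you cannot conclude $\mathbb{P}_{x}(T^{\varepsilon}_{r}<\infty)<1$ when $v_{r}(x)<1$, so ``$W$ transient $\Rightarrow F$ transient'' is not established either. The obstacle you flag for the converse direction is real and your proposed remedies (a scale-uniform hitting estimate, or a Green-kernel comparison) are not developed; each would amount to a separate, nontrivial argument.

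The paper avoids hitting times entirely and works directly in path space. It invokes a lemma of Franke stating that for Brownian motion $\mathbb{P}_{x}(\process{W}\in\partial T(O))=\mathbb{P}_{x}(\process{W}\in\partial R(O))=0$ for every bounded open $O$; the Portmanteau theorem applied to the full Skorokhod convergence of Theorem~\ref{tm3.1} then gives
\[
\lim_{\varepsilon\to 0}\mathbb{P}_{\mu}\bigl(\{\varepsilon F_{\varepsilon^{-2}t}\}_{t\geq0}\in T(O)\bigr)=\mathbb{P}_{0}\bigl(\process{W}\in T(O)\bigr),
\]
and likewise for $R(O)$. The decisive observation---which you already made---is that, via the identity $\{\tilde F^{\varepsilon}\in T(O)\}=\{F\in T(\varepsilon^{-1}O)\}$ and the sample-path characterization, the left-hand side is \emph{constant in $\varepsilon$} (equal to $1$ or $0$ according as $F$ is transient or recurrent, independently of $\varepsilon$ and $O$). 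Hence the limit equals the value at every $\varepsilon$, and both implications follow simultaneously.
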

\begin{proof}
As we commented above, it suffices to prove that $\mathbb{P}_{x}(\process{F}\in T(O))=1$ for all $x\in\R^{d}$ and
             all
             open bounded sets $O\subseteq\R^{d}$ if, and only if, $\mathbb{P}_{x}(\process{W}\in T(O))=1$ for all $x\in\R^{d}$ and
             all
             open bounded sets $O\subseteq\R^{d}$, and $\mathbb{P}_{x}(\process{F}\in R(O))=1$ for all $x\in\R^{d}$ and
             all
             open bounded sets $O\subseteq\R^{d}$ if, and only if, $\mathbb{P}_{x}(\process{W}\in T(O))=1$ for all $x\in\R^{d}$ and
             all
             open bounded sets $O\subseteq\R^{d}$. To see this, first, by \cite[Lemma 3]{franke-periodic}, we have  $\mathbb{P}_{x}(\process{W}\in\partial T(O))=\mathbb{P}_{x}(\process{W}\in\partial R(O))=0$ for all $x\in\R^{d}$ and all open bounded sets $O\subseteq\R^{d}$. Consequently, due to Theorem \ref{tm3.1} and \cite[Theorem 2.1]{billingsley-book}, for any initial distribution $\mu(dx)$ of $\process{F}$ and any  open bounded set $O\subseteq\R^{d}$, $$\lim_{\varepsilon\longrightarrow0}\mathbb{P}_\mu(\{\varepsilon F_{\varepsilon^{-2}t}\}_{t\geq0}\in T(O))=\mathbb{P}_{0}(\process{W}\in T(O))$$ and $$\lim_{\varepsilon\longrightarrow0}\mathbb{P}_\mu(\{\varepsilon F_{\varepsilon^{-2}t}\}_{t\geq0}\in R(O))=\mathbb{P}_{0}(\process{W}\in R(O)).$$ Next, since the processes $\{\varepsilon F_{\varepsilon^{-2}t}\}_{t\geq0}$, $\varepsilon>0,$ also satisfy (\textbf{C3}), $\mathbb{P}_\mu(\{\varepsilon F_{\varepsilon^{-2}t}\}_{t\geq0}\in T(O))$ and  $\mathbb{P}_\mu(\{\varepsilon F_{\varepsilon^{-2}t}\}_{t\geq0}\in R(O))$ are either $0$ or $1$ for all initial distributions $\mu(dx)$ of $\process{F}$, all $\varepsilon>0$ and all open bounded sets $O\subseteq\R^{d}$. Thus, in order to prove the desired result, it suffices to prove that the functions $$\varepsilon\longmapsto\mathbb{P}_\mu(\{\varepsilon F_{\varepsilon^{-2}t}\}_{t\geq0}\in T(O))\quad \textrm{and}\quad \varepsilon\longmapsto\mathbb{P}_\mu(\{\varepsilon F_{\varepsilon^{-2}t}\}_{t\geq0}\in R(O))$$ are continuous for all  initial distributions $\mu(dx)$ of $\process{F}$ and all  open bounded sets $O\subseteq\R^{d}$. But this fact follows directly from
             \begin{align*}\left\{\omega\in\Omega:\{\varepsilon F_{\varepsilon^{-2}t}(\omega)\}_{t\geq0}\in T(O)\right\}&=\left\{\omega\in\Omega:\{F_{t}(\omega)\}_{t\geq0}\in T(\varepsilon^{-1}O)\right\},\\ \left\{\omega\in\Omega:\{\varepsilon F_{\varepsilon^{-2}t}(\omega)\}_{t\geq0}\in R(O)\right\}&=\left\{\omega\in\Omega:\{F_{t}(\omega)\}_{t\geq0}\in R(\varepsilon^{-1}O)\right\}\end{align*} and the characterization of the transience and recurrence through the sample paths of $\process{F}$. Here, for $a\in\R$ and $B\subseteq\R^{d}$, $aB:=\{ab:b\in B\}.$
\end{proof}

At the end, let us comment the (strong) ergodicity of  periodic diffusions with small jumps.
First, recall   that if $\process{M}$ is
a recurrent Markov process, then it admits a unique (up to constant
multiples) invariant measure (see \cite[Theorem 2.6]{tweedie-mproc}). If the invariant measure is
finite, then it may be normalized to a probability measure.  If
$\process{M}$ is  recurrent with finite invariant measure, then $\process{M}$ is called \emph{positive
recurrent}, otherwise it is called \emph{null  recurrent}. One would expect
that every positive  recurrent process  is (strongly) ergodic,
but in general this is not true (see \cite{meyn-tweedie-II} and \cite{turbul}). In the
case of a Lebesgue-irreducible diffusion with jumps $\process{F}$,
these  properties are equivalent.
 Indeed,
according to \cite[Theorem 6.1]{meyn-tweedie-II} and \cite[Theorem
3.3]{rene-wang-feller} it suffices to show that if
 $\process{F}$ admits an invariant probability measure $\pi(dx)$, then
it is recurrent. Assume that $\process{F}$ is transient. Then there
exists a countable
                      covering of $\R^{d}$ with  sets
$\{B_j\}_{j\in\N}\subseteq\mathcal{B}(\R^{d})$, such that for each
$j\geq1$ there is a finite constant $c_j\geq0$ such that
$\int_0^{\infty}\mathbb{P}_x(F_t\in B_j)dt\leq c_j$ holds for all $x\in\R^{d}$.
Let $t>0$ be arbitrary. Then for each $j\geq1$ we have
\begin{equation*}t\pi(B_j)=\int_0^{t}\int_{\R^{d}}\mathbb{P}_x(F_s\in B_j)\pi(dx)ds\leq c_j.\end{equation*} In particular,  by
letting $t\longrightarrow\infty$ we deduce that $\pi(B_j)=0$ for all
$j\geq1$, which is impossible.

Now, directly from Theorem \ref{tm7.1}, we  conclude that a $d$-dimensional, $d\geq3$,  diffusion with jumps $\process{F}$ satisfying (\textbf{C3})-(\textbf{C6}) is never (strongly) ergodic. Recall that a $d$-dimensional zero-drift Brownian motion is recurrent if, and only if, $d=1,2$ (see \cite[Corollary 37.6]{sato-book}). Moreover, $\process{F}$ is neither (strongly) ergodic in $d=1,2$. Indeed, if this were the case, then $\process{F}$ would possess an invariant probability measure, say $\pi(dx)$.  Consequently, for any $\varepsilon>0$,  $\{\varepsilon F_{\varepsilon^{-2}t}\}_{t\geq0}$  also possesses invariant probability measure which is given by $\pi^{\varepsilon}(dx):=\pi(\varepsilon^{-1}dx)$. To see this, first note that $\process{\varepsilon F_{\varepsilon^{-2}}t}$ is a Markov processes with respect to $\mathbb{P}_x^{\varepsilon}(d\omega):=\mathbb{P}_{\varepsilon^{-1}x}(d\omega),$ $x\in\R^{d}.$
Thus, for all $\varepsilon>0$, all $t\geq0$ and all $B\in\mathcal{B}(\R^{d})$, we have
\begin{equation*}\int_{\R^{d}}\mathbb{P}^{\varepsilon}_{x}(\varepsilon F_{\varepsilon^{-2}t}\in B)\pi^{\varepsilon}(dx)= \int_{\R^{d}}\mathbb{P}_{\varepsilon^{-1} x}( \varepsilon F_{\varepsilon^{-2}t}\in B)\pi(\varepsilon^{-1}dx)
=\pi(\varepsilon^{-1}B)=\pi^{\varepsilon}(B).\end{equation*}
Finally, by fixing an arbitrary $t_0>0$, Theorem \ref{tm3.1} implies that \begin{align*}\mathbb{P}_0(W_{t_0}\in B(0,1))&=\lim_{\varepsilon\longrightarrow0}\mathbb{P}_{\pi}(\varepsilon F_{\varepsilon^{-2}t_0}\in B(0,1))\\&=\lim_{\varepsilon\longrightarrow0}\mathbb{P}^{\varepsilon}_{\pi^{\varepsilon}}(\varepsilon F_{\varepsilon^{-2}t_0}\in B(0,1))
\\&=\lim_{\varepsilon\longrightarrow0}\pi^{\varepsilon}(B(0,1))\\&=1,\end{align*} which leads to a contradiction.

 \section*{Acknowledgement}
 This work was supported by the Croatian Science Foundation under Grant 3526 and NEWFELPRO Programme  under Grant 31.

\bibliographystyle{alpha}
\bibliography{References}

\end{document}